\author{Juliette Kennedy\\
Department of Mathematics and Statistics\\ University of Helsinki,
Finland \and Jouko V\"a\"an\"anen\\
Department of Mathematics and Statistics\\ University of Helsinki,
Finland\\
ILLC, University of Amsterdam\\
Amsterdam, Netherlands }
\title{Logicality and Model Classes\footnote{The first  author would like to thank  the Academy of Finland, grant no: 322488. The second  author would like to thank  the Academy of Finland, grant no: 322795. This project has received funding from the European Research Council (ERC) under the European Union’s Horizon 2020 research and innovation programme (grant agreement No 101020762).The authors are grateful to Denis Bonnay, Dag Westerst\aa hl  and two anonymous referees for comments.}}
\begin{document}

\maketitle
\def\Mod{\mbox{Mod}}
\def\LS{\mbox{\rm LS}}
\def\Str{\mbox{\rm Str}}
\def\sp{\mbox{\rm sp}}
\def\sm{\setminus}
\def\sd{\bigtriangleup}
\def\al{\alpha}
\def\de{\delta}\def\d{\delta}
\def\be{\beta}
\def\om{\omega}
\def\ga{\gamma}
\def\fa{\forall}
\def\fo{\forall}
\def\ex{\exists}
\def\ogy{{\omega_1}}
\def\aly{{\aleph_1}}
\def\og{{\omega}}
\def\restriction{\upharpoonright}
\def\rest{\upharpoonright}
\def\Part{{\rm Part}}\def\Aut{{\rm Aut}}
\def\rank{\mathop{\rm rank}}
\def\rnk{{\rm rk}}
\def\Supp{\mathop{\rm Supp}}
\def\cof{\mathop{\rm cf}}
\def\cf{\mathop{\rm cf}}
\def\supp{\mathop{\rm Supp}}
\def\nst{{\rm NS}}
\def\suc{{\rm suc}}
\def\lim{{\rm lim}}
\def\club{{\rm CUB}}
\def\NS{{\rm NS}}
\def\CUP{{\rm CUB}}
\def\rng{\mathop{\rm rng}}
\def\len{\mathop{\rm len}}
\def\card{\mathop{\rm Card}}
\def\dom{\mathop{\rm dom}}
\def\ran{\rng}
\def\fe{\forall\exists}
\def\EF{Ehrenfeucht-Fra\"\i ss\'e\ }
\def\ef{{\rm EF}}
\def\nef{{\rm N-EF}}
\def\clubg{G}
\def\lih#1{\mathbf{#1}}
\def\hht{{\rm ht}}
\def\bdelta{\lih{\char'016}}
\def\brho{\lih{\char'032}}
\def\bx{\lih{x}}
\def\bz{\lih{z}}
\def\by{\lih{y}}
\def\ba{\lih{a}}
\def\bb{\lih{b}}
\def\bc{\lih{c}}
\def\btau{\lih{\tau}}
\def\brho{\lih{\rho}}
\def\P{{\cal P}}
\def\iG{\rotatebox[origin=c]{180}{G}}
\def\T{Tarski\ }
\def\Ts{Tarski's\ }
\def\On{{\rm On}}
\def\la{\langle}
\def\ra{\rangle}
\def\sub{\subseteq}
\def\Q{{\mathbb Q}}
\def\R{{\mathbb R}}
\def\ma{{\mathcal A}}
\def\mb{{\mathcal B}}
\def\mc{{\mathcal C}}
\def\mm{{\mathcal M}}
\def\mn{{\mathcal N}}
\def\mg{{\mathcal G}}
\def\mh{{\mathcal H}}
\def\ms{{\mathcal S}}
\def\azo{(\ma_0,\ma_1)}
\def\Str{{\rm Str}}\def\str{{\rm Str}}
\def\Seq{{\rm Seq}}
\def\kg{G\"odel\ }
\def\kgs{G\"odel's\ }
\def\phl{philosophical\ }
\def\phy{philosophy\ }
\def\phly{philosophically\ }
\def\prv{provability\ }
\def\dy{definability\ }
\def\ma{mathematics\ }
\def\mal{mathematical\ }
\def\malp{mathematical.\ }
\def\maly{mathematically\ }
\def\man{mathematician\ }
\def\mansa{mathematician's\ }
\def\mans{mathematicians\ }
\def\ff{formalism freeness\ }
\def\const{constructibility\ }
\def\nl{natural language\ }
\def\ind{independence\ }
\def\fss{formal systems\ }
\def\a{\alpha}
\def\k{\kappa}
\def\HOD{\mbox{HOD}}
\def\OD{\mbox{OD}}
\def\ZFC{\mbox{\rm ZFC}}
\def\Val{\mbox{Val}}
\def\V{\mbox{V}}
\def\DF{\mbox{D}_F}
\def\SFD{\mbox{SF}_D}
\def\BD{\mbox{B}_D}
\def\ID{\mbox{I}_D}

\def\K{\mathcal{K}}
\def\C{\mathcal{C}}
\def\mak{\mathfrak{A}}
\def\mb{\mathfrak{B}}
\def\Lo{\mathcal{L}}

\def\HOD{\mbox{HOD}}
\def\OD{\mbox{OD}}
\newcommand{\open}{\Bbb}
\newcommand{\oA}{{\open A}}
\newcommand{\oC}{{\open C}}
\newcommand{\oF}{{\open F}}
\newcommand{\oN}{{\open N}}
\newcommand{\oP}{{\open P}}
\newcommand{\oQ}{{\open Q}}
\newcommand{\oR}{{\open R}}
\newcommand{\oZ}{{\open Z}}
\def\paragr{\S}
\def\ZFC{\mbox{\rm ZFC}}
\def\ma{mathematics\ }
\def\ff{formalism freeness\ }
\def\man{mathematician\ }
\def\mans{mathematicians\ }
\def\kg{G\"odel\ }
\def\kgs{G\"odel's\ }
\def\mal{mathematical\ }
\def\maly{mathematically\ }
\def\phl{philosophical\ }
\def\phy{philosophy\ }
\def\phly{philosophically\ }
\def\phrs{philosophers\ }
\def\p{Platonism\ }
\def\d{definability\ } 
\def\pr{provability\ }
\def\a{\alpha}
\def\b{\beta}

\newtheorem{theorem}{Theorem} \newtheorem{lemma}[theorem]{Lemma}
\newtheorem{remark}[theorem]{Remark}
\newtheorem{example}[theorem]{Example}
\newtheorem{proposition}[theorem]{Proposition}
\newtheorem{observation}[theorem]{Observation}
\newtheorem{corollary}[theorem]{Corollary}
\newtheorem{definition}[theorem]{Definition}
\def\la{\langle}
\def\ra{\rangle}
\def\looo{L_{\omega_1\omega}}
\def\loo{L_{\omega\omega}}
\def\lio{L_{\infty\omega}}
\def\Loo{L_{\omega\omega}}
\def\here{{\Huge HERE.}}
\def\o{\omega}
\def\k{\kappa}
\def\b{\beta}
\def\f{\formalization}
%\tableofcontents

\begin{abstract}
We ask, when is a property of a model a logical property? According to the so-called Tarski-Sher criterion this is the case when the property is preserved by isomorphisms. We relate this to model-theoretic characteristics of abstract logics in which the model class is definable.  This results in a graded concept of logicality in the terminology of Sagi \cite{MR3794871}. We investigate which characteristics of logics, such as variants of the L\"owenheim-Skolem Theorem, Completeness Theorem, and absoluteness, are relevant from the logicality point of view, continuing earlier work by Bonnay, Feferman, and Sagi. We suggest that a logic is the more logical the closer it is to first order logic. We also offer a refinement of the result of McGee  that logical properties of models can be expressed in $L_{\infty\infty}$ if the expression is allowed to depend on the cardinality of the model, based on replacing $L_{\infty\infty}$ by a ``tamer" logic. 
\end{abstract}

\section{Introduction}

To say that  the syllogism

\begin{center}
All $A's$ are $B's$.

Some $C's$ are $A's$.

Some $C's$ are $B's$.
\end{center}

\noindent is a valid argument form is to say that the conclusion follows from the premises no matter which  predicates are substituted for the nonlogical terms $A$, $B$, $C$, whether `men', `mortal', `Greek' or `tulips', `bridges' and `pious', or whatnot, as long as the substitution is done uniformly. That the conclusion follows from the premises is a matter of {\em logic}---``is obviously true in a purely logical way," to quote Carnap.\footnote{\cite{MR0231696}, Engl. translation in \cite{logsyntax}.}

But what is it that makes a concept distinctively logical, as opposed to, say, mathematical?  How to circumscribe the logical? The question  is an urgent one  for the logician, as the  model-theoretic notion of  consequence is parasitic on the distinction  between  logical and nonlogical expressions. For on this account of logical consequence, a sentence $\phi$ is held to be a semantic consequence of a sentence $\psi$, if for every uniform substitution of the nonlogical expressions in  $\phi$  and  $\psi$, if  $\psi$  is true, then so is  $\phi$.

In 1968   in  a  (posthumously published) lecture called ``What are logical notions?"  \cite{MR868748}
 Tarski proposed  a definition of   ``logical notion," or alternatively of ``logical constant,"  modelled on the  Erlanger Program due to Felix Klein.  
%Designed for classifying geometrical notions, here  one declares  the ``notions" of metric geometry, descriptive geometry and  topological geometry, to be those invariant under the respective transformations: similarity, affine, and continuous. Thus a topological notion, for example, will be one   invariant under continuous transformations of the underlying topological space.
The core observation is the following:  for a given subject area, the number of concepts  classified as invariant are inversely related to the number of transformations---the more  transformations there are, the fewer invariant notions there are. If one  thinks  of logic  as the most general of all the \mal sciences, why not then declare  ``logical" notions to be the limiting cases? Thus a notion is to be thought of as 
logical if it is invariant under {\em all} permutations of the relevant domain. 

As expected,  the standard logical constants, namely conjunction, disjunction, negation and quantification, together with the equality relation, are all judged to be logical operations under this criterion, being all isomorphism invariant. This has the consequence that first order definability is classified as logical, {\em tout court}, being generated by these constants.\footnote{We argue for this below. V. McGee  \cite{MR1426482} extends  the observation to $L_{\infty\infty}$: 

\begin{quote}
Since the primitive connectives of $L_{\infty\infty}$  are all intuitively clearly logical connectives, and since, intuitively, anything definable from logical connectives is again a logical connective, this will show that every operation invariant under permutations is describable by a logical connective, so every operation invariant under permutations is a logical operation.

\end{quote} }

As was clear to Tarski, the criterion  {\em overgenerates}, in the sense that concepts one does not immediately assimilate to logic turn out to be included, such as the cardinality of the underlying domain, which is clearly isomorphism invariant. 
%As G. Sagi has argued, and as we will argue at length in this paper, first order logic emerges  again as having a high degree of logicality, via its strong L\"owenheim-Skolem theorem. This is because the theorem actually encodes the indifference to cardinality of first order logic, in the special sense that models of cardinality greater than $\aleph_0$ do not affect its validities.  
A second  problem  has to do with  domain relativity: Tarski's criterion identifies the logical operations on a fixed domain,  but that a domain consists of this or that type of object should not have anything to do with logicality---shouldn't a logical concept  be domain independent? G. Sher \cite{MR1203778}  repaired this problem by extending Tarski's criterion to cover  notions invariant across  isomorphic structures,  and accordingly the criterion is now known as the Tarski-Sher invariance criterion. 

%Most of these arguments  are characterised by the attempt to pin down the core notion (of logicality), so  ignoring Tarski\index{Tarski, Alfred}'s caveat. 

The Tarski-Sher criterion has generated a substantial literature, both for and against. 
 S. Feferman's \cite{MR2666567}, based on his earlier \cite{MR1811202},  proposes a notion of {\em homomorphism} invariance. His critique of  the Tarski-Sher thesis is the following:

\begin{quote}
I critiqued the Tarski-Sher thesis in \cite{MR1811202} on three grounds, the first of which is that it assimilates logic to mathematics, the second that the notions involved are not set-theoretically robust, i.e. not absolute, and the third that no natural explanation is given by the thesis of what constitutes the same logical operation over arbitrary basic domains.\footnote{\cite{MR2666567}, p. 1.}
\end{quote}

Feferman's suggestion that Tarski assimilates logic to mathematics,  in particular to set theory, sets aside Tarski's lifelong program to carry out  exactly what Feferman criticizes  Tarski of here, namely expressing metamathematical concepts in \mal and set-theoretical terms.\footnote{The first author has  argued this point in \cite{Kennedy2021-KENGTA-2}.}  Feferman's second objection has to do with overgeneration, in particular he resists the idea that a canonically \mal but non-absolute notion such as cardinality  is rendered  logical under the criterion.\footnote{The fact that cardinality is construed as logical under his criterion seemed to pose no problem for Tarski:

\begin{quote}
That a class consists of three elements, or four elements \ldots that it is finite, or infinite---these are logical notions, and are essentially the only logical notions on this level.  \cite{MR868748}, p. 151.
\end{quote}}

One way of thinking about  topic neutrality  is in terms of {\em absoluteness}, by which we mean general independence from the background set theory.  As is well-known, a set may have one cardinality in one model of set theory but another cardinality in another model.\footnote{Cardinality also emerges as an  artifact in the context of ramsification and Newman's objection to epistemic structural realism. See Ainsworth \cite{MR2485506}.} 
This shows that cardinality is not an absolute notion. On the other hand, whether a given set is empty or not, whether it is included in another given set, and whether it is the cartesian product of two given sets are all independent of the (transitive) model in which such questions are evaluated. Thus those notions are considered absolute. A possible reason to consider absoluteness a necessary quality of logical notions is that whatever depends on set theory could be considered a mathematical rather than logical notion. 

Quine has extensively argued against taking set-theoretic notions as logical,\footnote{See for example  \cite{MR844769}.} and in his seminal paper  \cite{MR868748} Tarski raises the question whether {\em mathematical} notions are logical. According to Tarski, if we construe mathematical notions in the framework of higher order logic, they are logical, while if we use the set-theoretical framework, based on a single binary relation $x\in y$, mathematical notions are not logical. Tarski concludes that the question of logicality of mathematical notions is unresolved and depends on how we construe mathematics. In this paper we take the set-theoretical approach to mathematics. However, we operate with model classes, namely  classes of structures of the same similarity type, but possibly of different cardinality, that are closed under isomorphism,\footnote{If we work in set theory, classes are objects of the form $\{a:\phi(a)\}$. Ordinary set theory does not have objects of this kind, so classes are simply identified with their defining formulas, in this case $\phi(x)$. The defining formula is allowed to have set parameters. Thus when we refer to the class of all ordinals, the class of all structures, etc, we mean to refer to the defining formula, which defines when a set is an ordinal, the formula which defines when a set is  structure, etc. If we worked in class theory, such as the Mostowski-Kelley-Morse class theory, we would not need to assume that classes are definable.}  which are part of the higher order logic framework. For example, according to Tarski the notion of a binary relation $R$ being a well-order is non-logical, as it is defined by reference to the $\in$-relation of set theory,  but the notion of a structure $(M,<)$ being a well-order is logical, as it is second order definable.  This example shows how delicate the question of logicality of mathematical notions is.
 %A logical notion, it is thought, should not be sensitive to the background set theory, in particular a logical notion should not be entangled with cardinality, which is famously non-absolute%
%JV
%, because of forcing

%Accordingly 
Feferman proposes restricting the Tarski-Sher invariance criterion to operations that are absolute with respect to set theories making no assumptions about the size of the given domain. Feferman cites \cite{MR1330511},\footnote{Feferman also cites an unpublished result of K. Manders.} in which it is shown that operations on relational structures that are definable in an absolute way relative to KPU-Inf, i.e. Kripke-Platek set theory with urelements and
without the Axiom of Infinity, are exactly those expressible in  ordinary first-order predicate calculus with equality---another endorsement of the idea that first order logic  captures the notion of ``logical concept."

Feferman's solution solves the overgeneration problem, while giving a nice characterisation of first order logic along the way. But this comes at the expense of a strong absoluteness assumption, and for  restricted classes of structures. 

Setting overgeneration aside for the moment, and considering Feferman's third objection, namely the problem of domain relativity, it would seem that following Feferman's line we should consider model classes in general.
How then to formulate the concept of logicality for a model class? Originally the question of logicality arises in connection with operations such as connectives, quantifiers and their generalisations. As V. McGee explains in  \cite{MR1426482}, the question can be restated as a question about model classes,  as the question whether the property of a given model being in a given model class is a logical property of the model.\footnote{This property is essentially the same as deeming the associated generalised quantifier,  together with everything first definable from it, as logical. See below section \ref{mcg}.} In other words, any relevant operation $O$ can be (re)formulated as a property $P$ of models (see Section~\ref{ovsmc}) and then the logicality of $O$ translates into the question whether the property $P$ is a logical property of models. Typical examples of logical properties of a unary model $(M,A)$, where $A\subseteq M$, would be the non-emptyness of $A$, and the property of $A$ of being equal to $M$. On the side of operators these example would correspond to the existential and the universal quantifers.
%In fact 
%it is rather generally recognised that first order definable properties, such as 
%$$\exists x(R(x,x)\wedge\neg P(x))$$
%$$\forall x\exists y R(x,y)$$ 
%should be considered logical. REF Bonnay and Feferman. SAY MORE HERE??? IS THIS IN THE RIGHT PLACE??

Let's back up and ask when would we call a property of an {\em individual} structure $\mm$ logical. With Sher, certainly we should consider the elements of $\mm$ irrelevant as long as the arrangement of the elements remains the same, as was mentioned. It would then be natural to call a property of $\mm$ logical only if the property is closed under permutations of the elements of $\mm$ in the sense that if $f$ is a permutation of $M$ then the image of $\mm$ as a structure under $f$ has the property.  But then it is a short step to require  closure under isomorphisms.

%In this paper we shall explore this question. We will consider a

A theorem due to McGee  \cite{MR1426482}  characterises logicality for an arbitrary model class, provided the cardinality of the models in the class is fixed in advance. It is a weakness of this characterisation that it depends on the cardinality of the models in the class. One might then ask, whether there is a  sentence in a logic $L^*$, perhaps other than $L_{\infty\infty}$, which characterises the logical property, not just in one cardinality, but in many cardinalities. This leads  to an analysis of {\em spectra}, as we call them,\footnote{See also \cite{MR3794871}.} and L\"owenheim-Skolem style properties of logics as expressed in their L\"owenheim and Hanf numbers, which we now define:

\begin{definition}\label{lnum}
Suppose $C$ and $D$ are classes of cardinal numbers.
 A logic $L^*$ satisfies the \emph{L\"owenheim-Skolem Property} $\LS(C,D)$ if every sentence in $L^*$
 % with vocabulary of \marginpar{What is $\kappa$?}size $\le\kappa$ 
 which has a model of some cardinality in $C$ has a model of some cardinality in $D$. 
 The smallest  $\kappa$ such that $L^*$ satisfies $\LS([1,\infty),[1,\kappa])$ is called the L\"owenheim number of $L^*$ and denoted $\ell(L^*)$. 
The smallest  $\kappa$ such that $L^*$ satisfies $\LS([\kappa,\infty),[\lambda,\infty))$ for all $\lambda$ is called the Hanf number of $L^*$ and denoted $h(L^*)$.
\end{definition}

As G. Sagi notes in her \cite{MR3794871}, the {\em L\"owenheim number} of a logic encodes the degree to which the logic is indifferent to cardinality.
%\footnote{For the definition of L\"owenheim number see section \ref{spe}. This observation is an  important point of departure for  this paper.}
%In detail, if a logic has a  strong L\"owenheim-Skolem style property, this blunts the effect of overgeneration (for that  logic).
This is due to the fact that if a sentence satisfied by a model is also satisfied by a model of size  less than or equal to a given cardinality $\kappa$, then all the validities\footnote{Or for Sagi, the meaning, see \cite{MR3794871}.} which are captured in the full range of models are captured already on an initial segment (of $V$). In the case of first order logic the  L\"owenheim-Skolem Theorem tells us that $\kappa$ can be taken to be $\aleph_0$, so in this sense first order logic is {\em indifferent} to cardinalities above $\aleph_0$. First order logic is already classified  as logical because its logical constants are permutation invariant; this is now witnessed by the L\"owenheim-Skolem Theorem.

The strength of the L\"owenheim-Skolem property of a given logic---or if you like, the degree of its indifference to cardinality---is measured, then, by its L\"owenheim number. As an example, the quantifier $Q_{\alpha}$ associated with the logic $\mathcal{L}(Q_{\alpha})$, having L\"owenheim number $\aleph_{\alpha}$, is in a sense indifferent to  cardinalities greater than $\aleph_{\alpha}$.\footnote{This is a central example of \cite{MR3794871}.}  First order logic and $\mathcal{L}(Q_0)$ are maximally logical under the criterion assigning logicality according to L\"owenheim number, and the degree of logicality decreases as $\alpha$ increases. Thus if $\alpha \leq \beta$, then $Q_{\alpha}$ is more logical than $Q_{\beta}$.\footnote{$\mathcal{L}(Q)$ denotes first order logic with the generalised quantifier $Q$ appended. The expression ``$Q_\alpha x\phi(x)$" means that ``there are at least $\aleph_\alpha$ many $x$ such that $\phi(x)$." By the L\"owenheim-Skolem Theorem, the  L\"owenheim number of first order logic is $\aleph_0$, as was noted; for each $\alpha$, the  L\"owenheim number of the logic $\mathcal{L}(Q_{\alpha})$ is $\aleph_{\alpha}$.}   

%Thus, Sagi suggests,  the degree of logicality is the stronger the  stronger form of a L\"owenheim-Skolem Theorem holds for the logic.

%Together with McGee's Theorem, Sagi's suggestion to tie logicality to L\"owenheim numbers is our point of departure in this paper. 
The criterion for logicality presented in Sagi's paper is based on a particular philosophical position, to wit: a metaphysical  view of the cumulative hierarchy of sets, as well as a view about how the meaning of the terms of a logic are fixed.  
  \begin{quote}
\ldots 
%for a logic $L$, if $\ell(L)= \kappa$, then dismissing all models of cardinality greater than $\kappa$ will not make a difference to the validities of the logic. the 
the higher set-theoretic infinite is metaphysically loaded\ldots The lower the cardinalities to which the {\em meaning}\footnote{emphasis ours} of a term may be sensitive, the more logical it is.  \ldots [We] view L\"owenheim numbers as telling us how much structure a term [in this case the quantifiers $Q_\alpha$ JK/JV] requires in order to be fixed in the context of a logic."\footnote{\cite{MR3794871}} 
%\ldots We may venture to say that the terms in the logic $L$ in such a case, if fixed faithfully, require no more structure than that which is given by the set of cardinalities less or equal to  $\kappa$.   \footnote{REF} 
\end{quote}

In short, the smaller the  L\"owenheim number of the logic,  the less metaphysically involved e.g. the quantifier  $Q_{\alpha}$ is.

Sagi's criterion partitions  logical space in  ways that differ from ours. In this paper we will not take account of the possible metaphysical commitments of set theory;  and while her analysis of  meaning is one we potentially endorse, we will not take a stand on the meanings of the terms of a logic in this paper. Our basic thesis is simply this: {\em if first order logic is taken to be the fundamental exemplar  of logicality, then logics that resemble, to a degree, first order logic in their model theoretic properties should be graded as logical to that degree}. 

Why do we take first order logic to be the fundamental exemplar  of logicality? Considerations of space do not allow us to argue for this contested point here.\footnote{ Those who have contested  the view that logic is first order logic include Sher \cite{MR1203778} and Barwise \cite{MR819532}.  Many others have argued for the value of nonclassical and higher order logics on grounds other than logicality such as Boolos \cite{boolos}. See also Sher's 2016 \cite{sherfriction}. } Indeed those opposed to the thesis that first order logic is maximally logical can perhaps find this paper useful as a test of that very thesis, given the complexity of the landscape of higher order logics laid out here, with respect to their (graded) degrees of  logicality. For example, we suggest in this paper that logics such as $\mathcal{L}(Q_1)$ have, arguably, a higher degree of logicality than $\mathcal{L}(Q_0)$, on the basis of the completeness of the former with respect to the Keisler axioms (see (\ref{ka}) in Section~\ref{compl}).

As is well known, Quine was a forceful advocate for the thesis that logic is first order logic on various grounds, including: First, the fact that first order logic has a complete proof system. Second, first order logic is the basis of  the distinction between logic and set theory, as we mentioned above. Finally, there is Quine's view of semantics, in particular his view that first order quantification is the optimal instrument for reading off the ontological commitments of a given theory, being itself ontologically minimal.\footnote{See e.g.  \cite{MR844769}.} 

Apart from completeness, we do not take a stand on these stalwarts of Quinean logical theory here,\footnote{More recent advocates for the thesis that first order logic is logic include Bonnay \cite{MR2395046} and Feferman \cite{MR2666567}.} rather referring to what seems to be a consensus among writers on the topic  that the connectives ($\wedge,\vee,\neg,\to$
, etc) as well as the quantifiers ($\exists,\forall$) are logical operations, being closed under bijections, and also on the basis of what seems to be a strong intuition that they have the same meaning across domains of even different sizes. In fact, in many cases  these operations are where the discussion on logicality starts. First order logic consists of iterations of these operations. Why would logicality disappear in this iteration? When we go beyond first order logic, e.g. to infnitary connectives and generalized quantifiers, we recognise the emerging problem whether we remain in the realm of logicality.

Thus while  taking up Sagi's suggestion that logicality may be calibrated by  L\"owenheim numbers, we also give weight to the other model theoretic properties of a logic, such as whether they have a Completeness Theorem or not.\footnote{Quine  emphasises the Completeness Theorem as primary evidence of the logicality of first order logic, asserting it as proof of the existence of ``an integrated domain [i.e. first order logic JK/JV] of logical theory with bold and significant boundaries."  See p. 90-91 of \cite{MR844769}.} As we will point out below,  logics that have, for example, a Completeness 
Theorem do not seem to be tied to the $\aleph$-hierarchy in any obvious way.
%---some of these logics are axiomatisable and some are not. 
%ßFor us, then,  the $\aleph$-hierarchy  doesn't serve as straightforwardly  calibrating logicality.

Another drawback of McGee's theorem which we address in this paper has to do with the model-theoretic properties of the logic $L_{\infty\infty}$, namely that it is highly non-absolute (see Definition~\ref{abso}), that it fails to have a strong L\"owenheim-Skolem theorem, and also that it is \emph{unbounded\footnote{as it is defined in \cite{zbMATH03506668}}} in the sense that it can define the concept of well-ordering leading to very large Hanf numbers.\footnote{See discussion following (\ref{wf}).} This leads us to ask whether $L_{\infty\infty}$ can be replaced by a ``tamer" logic, one closer to being first order in its model-theoretic properties. If such were to exist, then even with the problem of dependence on the cardinality of the models in the class unsolved, the relevant logicality claim  would be strengthened by virtue of its  proximity to first order logic.

In sum: in this paper we develop further this aspect of logicality identified by Sagi, namely that it is graded, albeit with a different philosophical agenda
%\todo{Maybe add here ref to Sher's book  \cite{MR1203778}.}
than is laid out in  \cite{MR3794871}. 
We especially point out problems in the L\"owenheim-Skolem properties of $L_{\infty\infty}$, along with other ways in which  it diverges from first order logic. As was mentioned above, we take first order definability as a particularly strong form of logicality. Accordingly, our suggestion here is that definability in a logic which resembles with its model theoretic properties first order logic should represent an intermediate form of logicality. We calibrate these intermediate forms according to the model theoretic properties of these logics, considering mainly  absoluteness, having a L\"owenheim-Skolem theorem, and having a completeness theorem.

While $L_{\infty\infty}$ is sufficient for McGee's theorem, there is a weaker logic, namely the so-called $\Delta$-extension of $L_{\infty\omega}$ which does the job as well. We shall explain in which way this weaker logic is better than  $L_{\infty\infty}$ as a test of logicality, and also point out in which respects it  may be lacking. Looking ahead, our refinement of McGee's theorem  replaces  $L_{\infty\infty}$ by a logic which is absolute and which has a  good L\"owenheim-Skolem theorem, along with a number of other desirable properties. The improvement here is partial in the sense that it is still the case that the definition is given relative to the size of the models in the class. In section \ref{sort} below we give a theorem which does not rely on this assumption, namely  we present a logic in which any model class is definable irrespective of the cardinality of the models in the class.

In spite of the seemingly clear intuition behind the syllogistic example with which we opened this paper, logicality is an elusive concept. It is by no means obvious what it should mean, even informally. But then, without such a clear informal intuition of its meaning it is difficult to judge whether this or that improvement of Tarski's original definition hits the mark or not, or even whether it is  a step in the right direction. The only thing there seems to be a consensus on in the wake of Tarski's suggestion to identify logicality with isomorphism invariance is that Tarski's criterion, extended by Sher, is a necessary but not sufficient one.\footnote{As Bonnay and Westerst\aa hl put it in their \cite{MR3531783}: ``And topic-neutrality, in the precise form of invariance under permutations of the universe, is almost universally agreed to be a necessary condition for logicality. It guarantees that the logical core of a language is general enough to carve out content
in any conceivable situation of language use, irrespective of what objects are being talked about."} 
 
 In this paper we create a base map of the landscape relevant for logicality. Following Tarski's, ours is a semantic approach. Beginning with McGee's Theorem \cite{MR1426482} to the effect that logical operations in the sense of Tarski-Sher can be described in each cardinality separately in $L_{\infty\infty}$, the landscape that opens in front of us is the world of different logics and the question, do the model-theoretic properties of these logics shed any light on the logicality problem?

\section{McGee's Theorem} \label{mcg}

To describe McGee's result and to put it into a more general framework we define what we mean by a ``logic": A \emph{logic} (a.k.a. abstract logic) in the sense of \cite{MR0244013} is a pair $L^*=(\Sigma,T)$, where $\Sigma$ is an arbitrary set (sometimes also a class) and $T$ is a binary relation between members of $\Sigma$ on the one hand and structures on the other. Members of $\Sigma$ are called $L^*$-sentences. Classes of the form $$\Mod(\phi)=\{\mm:T(\phi,\mm)\},$$ where $\phi$ is an $L^*$-sentence, are called $L^*$-characterizable, or $L^*$-definable, classes. Abstract logics are assumed to satisfy five axioms expressed in terms of $L^*$-characterizable classes, corresponding to being closed under isomorphism, conjunction, negation, permutation of symbols, and ``free" expansions.\footnote{The free expansion to vocabulary $L$ of a model class $K$ of a smaller vocabulary is  the class of all expansions of elements of $K$ to the vocabulary $L$.}  A class $\K$ of models is said to be \emph{definable} in a logic $L^*$ if there is a sentence $\phi$ in $L^*$ such that
$$\K=\Mod(\phi).$$

Every model class is definable in \emph{some} logic because we can take the model class as a generalized quantifier in the sense of \cite{MR0244012}: Suppose $\K$ is a model class with vocabulary $L$. For simplicity we assume $L=\{R\}$ where $R$ is a binary predicate symbol. We can associate with $\K$ the generalized quantifier $Q_\K$ with the semantics
$$\mm\models Q_\K xy\phi(x,y,\vec{a})\iff (M,\{(b,c)\in M^2 : \mm\models\phi(b,c,\vec{a})\})\in\K.$$ Now $\K$ is trivially definable in the extension $\Lo_{\omega\omega}(Q_\K)$ of first order logic by the quantifier $Q_\K$ by the sentence
$$Q_\K xyR(x,y).$$ Conversely, every class of models definable in $\Lo_{\omega\omega}(Q_\K)$, or indeed in any abstract logic, is a model class i.e. is closed under isomorphisms. We obtain the following simple and at the same time basic characterization:

\begin{theorem}[\cite{MR0244012}]\label{lind}
If $\K$ is a class of models of the same  vocabulary, then the following conditions are equivalent:
\begin{enumerate}
\item $\K$ is closed under isomorphisms i.e. $\K$ is a model class.
\item $\K$ is definable in some extension of first order logic by a generalized quantifier.
\item $\K$ is definable in some logic.
\end{enumerate}\end{theorem}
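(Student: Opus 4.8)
The plan is to prove the cycle of implications $(1)\Rightarrow(2)\Rightarrow(3)\Rightarrow(1)$; most of this has already been previewed in the discussion just before the statement, so the proof is largely a matter of generalizing that discussion to an arbitrary vocabulary and checking the relevant definitions.

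For $(1)\Rightarrow(2)$ I would use the model class $\K$ itself, in its vocabulary $L$, as the defining class of a generalized quantifier $Q_\K$. After recasting function symbols and constants as relations in the standard way, write $L=\{R_i:i\in I\}$ with $R_i$ of arity $m_i$, and let $Q_\K$ be the Lindström quantifier of type $(m_i)_{i\in I}$ binding, for each $i$, an $m_i$-tuple $\bx_i$ of variables in a formula $\phi_i$, with semantics
$$\mm\models Q_\K(\bx_i)_{i\in I}(\phi_i)_{i\in I}\iff\big(M,(\{\bb\in M^{m_i}:\mm\models\phi_i(\bb,\vec a)\})_{i\in I}\big)\in\K.$$
The only property a family of tuples-of-relations must have in order to define such a quantifier is closure under isomorphism, which is exactly hypothesis $(1)$; so $Q_\K$ is a legitimate generalized quantifier, and $\K=\Mod\big(Q_\K(\bx_i)_{i\in I}(R_i(\bx_i))_{i\in I}\big)$ in $\Lo_{\omega\omega}(Q_\K)$, which gives $(2)$. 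If $L$ is infinite one simply allows $Q_\K$ to have an infinite sequence of argument formulas; this is harmless here, since the only application of $Q_\K$ we ever need is the single trivial sentence displayed above.

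For $(2)\Rightarrow(3)$ I would verify that $\Lo_{\omega\omega}(Q)$ is an abstract logic for every generalized quantifier $Q$, i.e. check the five axioms: closure under isomorphism follows by induction on formulas, the $Q$-step using that the class defining $Q$ is isomorphism-closed; closure under conjunction and negation is built into the syntax; and closure under permutation of symbols and under free expansions is inherited from first order logic, using that the truth value of a $Q$-formula depends only on the interpretations of the symbols actually occurring in it. Then $\Lo_{\omega\omega}(Q_\K)$ is a logic in which $\K$ is definable, so $(3)$ holds. Finally, $(3)\Rightarrow(1)$ is immediate: by definition every abstract logic satisfies the isomorphism axiom, so every $L^*$-definable class $\Mod(\phi)$ is closed under isomorphism.

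Thus the whole argument amounts to unwinding definitions, and I do not expect a genuine obstacle. The only point that requires any care is the bookkeeping in $(1)\Rightarrow(2)$ for a general vocabulary --- reducing non-relational symbols to relations and, if one insists on allowing infinite $L$, working with a quantifier whose argument sequence is infinite. The implication $(3)\Rightarrow(1)$, which at first glance looks like it ought to be the substantive direction, in fact carries no content beyond the isomorphism axiom that is part of the very notion of an abstract logic.
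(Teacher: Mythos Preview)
Your proposal is correct and follows essentially the same approach as the paper: the paper's argument, given in the paragraph immediately preceding the theorem rather than in a separate proof environment, is precisely the cycle $(1)\Rightarrow(2)\Rightarrow(3)\Rightarrow(1)$ via the quantifier $Q_\K$ and the isomorphism axiom for abstract logics. You have simply supplied the bookkeeping for a general vocabulary and spelled out the verification that $\Lo_{\omega\omega}(Q_\K)$ satisfies the five axioms, details the paper leaves implicit by treating the case $L=\{R\}$ with $R$ binary ``for simplicity.''
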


%\section{Potential isomorphism as an alternative to isomorphism}

\subsection{Operations vs. model classes}\label{ovsmc}

In the literature, logical operations and model classes are considered to be carriers of logicality. Our paper focuses on model classes and on the extent to which they can be called logical, but here we take a moment to establish the connection between model classes and logical operations. McGee \cite{MR1426482} defines an abstract concept of what he calls a \emph{logical operation}. He goes on to define what it means for a logical operation to be \emph{described} by a formula of a logic. We review these definitions and establish a close connection between model classes and connectives as well as between definability of a model class and describability of a logical operation.

Suppose $M$ is a non-empty set. By the  semantic value (on $M$) of a formula (of any logic) we mean the set of assignments into $M$ that satisfy the formula. Abstractly, these are just  subsets $A$ of $M^{n}$ for some $n$. A \emph{(local) operation} $f$ \emph{on} $M$  maps  sequences $\langle A_\alpha : \alpha<\beta\rangle$ of  sets $A_\alpha\subseteq M^{n_\alpha}$ to  sets $f(\langle A_\alpha : \alpha<\beta\rangle)\subseteq M^n$. Such a local operation is a \emph{logical operation}, if it is closed under permutations of $M$, i.e. if  for all permutations $\pi$ of $M$
$$f(\langle \pi''A_\alpha : \alpha<\beta\rangle)=\pi''f(\langle A_\alpha : \alpha<\beta\rangle),$$
where for any $s=(a_1,\ldots,a_n)\in A\subseteq M^n$ we define $\pi(s)=(\pi(a_1),\ldots,\pi(a_n))$. 

\begin{example}Suppose $A_0,A_1 \subseteq M^n$. \emph{Conjunction} is the logical operation
$f^n_\wedge(\langle A_0,A_1\rangle)=A_0\cap A_1$. \emph{Disjunction} is the logical operation
$f^n_\vee(\langle A_0,A_1\rangle)=A_0\cup A_1$. Suppose $A\subseteq M^{n+1}$. The
\emph{existential quantifier} with respect to $n$ is the logical operation
$f^{n}_{\exists}(\langle A\rangle)=\{s\restriction\{0,\ldots,n-1\}:s\in A\}$.
\end{example}

For the existential second (and higher) order quantifiers to be logical operations would require an extension of the approach to include more general assignments, which we however disregard in this paper. Still, every formula of second (or higher) order logic gives rise (separately) to a logical operation in the above sense. The situation is the same with the so-called team semantics \cite{MR2351449} where semantic values are sets of sets of assignments, rather than sets of assignments as above.

More generally we  have an operation $f_M$ for every non-empty set $M$. Here we assume that $f_M$ is always an operation on $M$. We call the (class) mapping $M\mapsto f_M$ a \emph{global operation} and denote it $\bar{f}$. We call the global operation $\bar{f}$ a \emph{logical operation} if it is  closed under bijections, i.e.  
if  for all bijections $\pi:M\to N$
$$f_N(\langle \pi''A_\alpha : \alpha<\beta\rangle)=\pi''f_M(\langle A_\alpha : \alpha<\beta\rangle),$$

A formula $\phi(\vec{x})$ with an $n_\alpha$-ary predicate symbol $P_\alpha$ for each $\alpha<\beta$ is said to \emph{describe}  a local operation $f$ on $M$ if  in any model  $\mm$ with domain $M$ and $P_\alpha^\mm=A_\alpha\subseteq M^{n_\alpha}$  the semantic value of $\phi(\vec{x})$ is $f(\langle A_\alpha : \alpha<\beta\rangle)$. The concept of a formula describing a global operation is defined similarily.

\begin{example}
The formula $P_0(\vec{x})\wedge P_1(\vec{x})$ describes the operation $f^n_\wedge$.
The formula $P_0(\vec{x})\vee P_1(\vec{x})$ describes the operation $f^n_\vee$.
The formula $\exists x_n P_0(\vec{x},x_n)$ describes the operation $f^n_\exists$.\end{example}
If $\phi(\vec{x})$ is a formula of a logic $L^*$ in the above sense, then the operation described by $\phi(\vec{x})$ is a logical operation, since we assume that truth in every logic $L^*$ that we consider is closed under isomorphisms (cf. \cite{MR0244013}).

 An operation $\bar{f}$ can be represented alternatively as the model class in the vocabulary $L$ which has in addition to the predicate symbols $P_\alpha$, $\alpha<\beta$, a new predicate symbol $P$: $$\K_{\bar{f}}=
 \{\mm: \mbox{$\mm$ is an $L$-structure and }P^\mm=f_M(\langle P^\mm_\alpha:\alpha<\beta\rangle))\}.$$
%
%
%\begin{lemma}
It is easy to see that the class $\K_{\bar{f}}$ is closed under isomorphisms if and only if  the operation $\bar{f}$ is preserved by bijections.

On the other hand, any model class $\K$ can be turned into a logical operation $\bar{f^\K}$ as follows. Let for any non-empty set $M$ and relations $\vec{R}$ on $M$ of the right arity:
$$f^{\K}_M(\vec{R})=\left\{\begin{array}{ll}
M&\mbox{if $(M,\vec{R})\in\K$}\\
\emptyset&\mbox{otherwise.}
\end{array}\right.$$
%
%%
%\begin{lemma}
It is easy to see that the class $\K$ is closed under isomorphisms if and only if the operation $\bar{f^\K}$ is preserved by bijections.

\subsection{Cardinal dependent definability}

The theorem  of McGee improves ``some logic" of Theorem~\ref{lind} to a very specific logic, namely $L_{\infty\infty}$, but at a price: {\em we obtain a different definition  for each cardinality separately}.\footnote{We point out other weaknesses in the  below section~\ref{eww} involving the model-theoretic properties of $L_{\infty\infty}$, e.g. its non-absoluteness.} 
% In other words, if a property of $\mm$ is closed under isomorphisms, then the property can be expressed in $\Lo_{\infty,\infty}$ as a property of models of the same size as $M$ and vice versa. 

Note that Theorem~\ref{lind} has some  uniformity in the sense that the defining sentence does not depend on the cardinality of the model in question, but it is also lacking in uniformity in the sense that we do not have one single logic but possibly a different logic for each model class.

We now isolate this property of cardinality dependence.  For any cardinal $\lambda$ let $\K_\lambda$ be the class of elements of $\K$ with a domain of size $\lambda$. 
%Let $\Str$ denote the class of all structures (in a given vocabulary).

\begin{definition}
A model class $\K$ is \emph{cardinal dependently definable}, or \emph{CD-definable}, in a logic $L^*$, or \emph{cardinal dependently $L^*$-definable}, if $\K_\lambda$ is $L^*$-definable for every $\lambda$.
\end{definition}

Note that a model class can be   CD-definable even in first order logic without being definable even in $L_{\infty\infty}$:  Consider the class $\K_{\mbox{\tiny lim}}$ of models $(M,P)$, where either $|M|$ is a limit cardinal and $P=\emptyset$ or else $|M|$ is a successor cardinal and $P\ne\emptyset$. It is a consequence of the L\"owenheim-Skolem Theorem of $L_{\infty\infty}$ (see Proposition~\ref{lim} and the proof of Theorem~\ref{har} below) that this model class cannot be definable in it.  The following theorem relates isomorphism closure to CD-definability in the case of classes of models:

\begin{theorem}[\cite{MR1426482}]
\label{mcgee}
If $\K$ is a class of models of the same  vocabulary, then the following conditions are equivalent:
\begin{enumerate}
\item $\K$ is closed under isomorphisms.
\item $\K$ is CD-definable in $L_{\infty\infty}$.
\end{enumerate}
\end{theorem}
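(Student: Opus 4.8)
The direction $(2)\Rightarrow(1)$ is immediate: definability in any logic, in particular each $\K_\lambda$ being $L_{\infty\infty}$-definable, forces closure under isomorphism, since $L_{\infty\infty}$ satisfies the isomorphism axiom and isomorphic models have the same cardinality. So the whole content is in $(1)\Rightarrow(2)$. The plan is as follows. Fix an infinite cardinal $\lambda$ (the finite case is trivial, since up to isomorphism there are only finitely many models of a given finite size in a finite vocabulary, and each finite model is characterized up to isomorphism by a single first-order sentence). Assume the vocabulary is finite and relational, say $\{R\}$ for notational simplicity as in the discussion preceding Theorem~\ref{lind}; the general case is the same with more bookkeeping. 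I want to produce a single $L_{\infty\infty}$-sentence $\phi_\lambda$ such that for every model $\mm$ of size $\lambda$, $\mm\models\phi_\lambda$ iff $\mm\in\K$.

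The key idea is a Scott-sentence-style construction made possible by the fact that $L_{\infty\infty}$ allows conjunctions and disjunctions of arbitrary size and quantifier blocks of arbitrary length. First I would fix a set of representatives: let $\{\mm_i : i\in I\}$ be a set containing exactly one model from each isomorphism class inside $\K_\lambda$ (there is such a set, since every model in $\K_\lambda$ is isomorphic to one with underlying set $\lambda$, and there are only set-many relations on $\lambda$). For each such representative $\mm_i$, enumerate its domain as $\langle a^i_\xi : \xi<\lambda\rangle$ and write down the ``diagram sentence''
$$\psi_i \;=\; \exists \langle x_\xi : \xi<\lambda\rangle\Bigl(\bigwedge_{\xi\ne\eta} x_\xi\ne x_\eta \;\wedge\; \forall y\bigvee_{\xi<\lambda} y=x_\xi \;\wedge\; \bigwedge_{(\xi,\eta)\in R^{\mm_i}} R(x_\xi,x_\eta) \;\wedge\; \bigwedge_{(\xi,\eta)\notin R^{\mm_i}} \neg R(x_\xi,x_\eta)\Bigr),$$
which uses a quantifier block of length $\lambda$ and conjunctions/disjunctions of size $\lambda$ — all legitimate in $L_{\infty\infty}$. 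One checks directly that for a model $\mm$ of cardinality exactly $\lambda$, $\mm\models\psi_i$ iff $\mm\cong\mm_i$: the witnesses $x_\xi$ must be pairwise distinct and exhaust the domain (forcing a bijection with $\lambda$), and the last two big conjunctions force that bijection to be an isomorphism with $\mm_i$. Then set $\phi_\lambda = \bigvee_{i\in I}\psi_i$ (and if $I=\emptyset$, take $\phi_\lambda$ to be a contradiction). Since $\K$ is closed under isomorphism, $\mm\models\phi_\lambda$ iff $\mm$ is isomorphic to some $\mm_i$ iff $\mm\in\K_\lambda$, which is what we need; note $\phi_\lambda$ automatically only has models of size $\le\lambda$ and among those only size-$\lambda$ ones, so $\Mod(\phi_\lambda)=\K_\lambda$ exactly.

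The main thing to be careful about — rather than a genuine obstacle — is bounding the size of the disjunction and of each quantifier block so that $\phi_\lambda$ is a legitimate $L_{\infty\infty}$-sentence (a set, with set-sized conjunctions/disjunctions and set-sized quantifier strings): the index set $I$ has size at most $2^{\lambda}$ and each $\psi_i$ has quantifier blocks of length $\lambda$ and conjunctions over sets of size $\le\lambda$ or $\le 2^\lambda$, so everything stays a set, and $L_{\infty\infty}$ indeed permits this. A secondary point to handle cleanly is the relativization to models of size exactly $\lambda$: this is taken care of inside $\psi_i$ itself by the clauses forcing the witnesses to biject with the domain, so no separate cardinality sentence is needed. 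Finally, one should remark that this construction genuinely uses the unbounded quantifier rank and unbounded conjunction length of $L_{\infty\infty}$, and that no single $\phi$ works for all $\lambda$ simultaneously in general — consistent with the remark about $\K_{\mbox{\tiny lim}}$ preceding the theorem — which is exactly the ``price'' the theorem pays and the motivation for the rest of the paper.
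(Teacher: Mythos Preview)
Your proposal is correct and follows essentially the same approach as the paper: list (representatives of) the models in $\K_\lambda$, write down for each an $L_{\infty\infty}$-sentence characterizing it up to isomorphism, and take the disjunction. The paper's proof is just a terser version of yours---it invokes characterizing sentences $\theta_\alpha\in L_{\lambda^+\lambda^+}$ without spelling them out and takes $\bigvee_{\alpha<2^\lambda}\theta_\alpha$---whereas you make the diagram sentence explicit and handle the easy direction and size bounds more carefully.
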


\begin{proof}
Let us fix an infinite cardinal $\lambda$. Let $\mm_\alpha$, $\alpha<2^\lambda$, list all elements of $\K$ with domain $\lambda$. Let $\theta_\alpha\in L_{\lambda^+\lambda^+}$ characterize up to isomorphism the model $\mm_\alpha$. The sentence $\bigvee_{\alpha<2^\lambda}\theta_\alpha$ defines the model class $\K_\lambda$.
\end{proof}

The fact that McGee's  theorem depends on the cardinality of the models in the class means that the sentence of $\phi_\lambda$ of $L_{\infty\infty}$ defining the model class $\K_\lambda$ may very well depend on $\lambda$, as in the above example $\K_{\mbox{\tiny lim}}$. That is, the class size mapping $\lambda\mapsto\phi_\lambda$ may encode information that has to be analysed as to its logicality.  In the above example, when we ask whether the property of a model of belonging to the model class $\K_{\mbox{\tiny lim}}$ is logical or not, we essentially ask whether the property of a model of having a limit cardinality is logical or not. According to the Tarski-Sher criterion it undoubtedly is logical. On the other hand, it is fair to say that it is a mathematical property rather than a logical one. As we will see below, the logicality of membership in $\K_{\mbox{\tiny lim}}$ manifests in our below analysis a rather low degree of logicality.

As we pointed out earlier, McGee \cite{MR1426482} considers describability of logical operations rather than definability of model classes. Let us now draw a further connection,  using the $\Delta$-operation on logics, between describability of logical operations and definability of model classes. 
%We refer to Section~\ref{abs} for the definition of $\Delta(L^*)$. 

%We will now define the $\Delta$-extension of a logic. 
Suppose $\mathcal{K}$ is a class of models of vocabulary $L$ and $L'\subseteq L$. We use $\K\restriction L'$ to denote the class of reducts $\mm\restriction L'$ of models $\mm\in \K$. We call $\K\restriction L'$  a {\em projection} of $\K$. 

\begin{definition}\label{sigma11}
A model class is $\Sigma(L^*)$-definable, or in $\Sigma(L^*)$, if it is a projection of an $L^*$-definable model class. A model class is $\Delta$-definable in $L^*$, or in $\Delta(L^*)$, if both $\K$ and the complement of $\K$ are $\Sigma(L^*)$. 
\end{definition}

It should be noted that $\Delta(L_{\omega\omega})=L_{\omega\omega}$ \cite{MR104564}, $\Delta(L_{\omega_1\omega})=L_{\omega_1\omega}$ \cite{MR188059} and $\Delta(L_{\infty\omega})\subseteq L_{\infty\infty}$ \cite{MR290943}.

\begin{lemma}Suppose $f$ is a logical operation on $M$ and $L^*$ is a logic. Then (1) $\to$ (2) $\to$ (3), where:
\begin{enumerate}
\item[(1)] $f$ is describable in  $L^*$. 
\item[(2)] The model class $\K_f$ is definable in $L^*$. 
\item[(3)] $f$ is describable in $\Delta(L^*)$.

\end{enumerate}
\end{lemma}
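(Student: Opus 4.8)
The plan is to prove the two implications $(1)\to(2)$ and $(2)\to(3)$ separately, using the correspondence between logical operations and model classes set up in Section~\ref{ovsmc}. Recall that $\K_f$ (written $\K_{\bar f}$ in the excerpt) is the class of $L$-structures $\mm$ in the vocabulary $L=\{P_\alpha:\alpha<\beta\}\cup\{P\}$ such that $P^\mm=f_M(\langle P_\alpha^\mm:\alpha<\beta\rangle)$, where $M$ is the domain of $\mm$.

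For $(1)\to(2)$: suppose $\phi(\vec x)$ is an $L^*$-formula in the predicate symbols $P_\alpha$ that describes $f$, meaning that in any structure $\mm$ with $P_\alpha^\mm=A_\alpha$ the semantic value of $\phi$ on $M$ is $f_M(\langle A_\alpha:\alpha<\beta\rangle)$. I would express ``$P$ equals the semantic value of $\phi$'' by the sentence
$$\psi\ :=\ \forall\vec x\,(P(\vec x)\leftrightarrow\phi(\vec x)),$$
and claim $\K_f=\Mod(\psi)$. The only point needing care is that this construction stays inside $L^*$: an abstract logic in the sense of \cite{MR0244013} is closed under conjunction, negation, and (implicitly, through the usual treatment of free variables as new constants or via the closure axioms) under the first order operations $\forall x$ and $\leftrightarrow$ applied to its formulas, so $\psi$ is again an $L^*$-sentence. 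Strictly, to be safe I would phrase the argument so that $\phi$ is first turned into a sentence $\phi^*$ of $L^*$ in an expanded vocabulary (adding constants $\vec c$ for $\vec x$ and a predicate $P$), form $P(\vec c)\leftrightarrow\phi^*$, and then universally quantify; isomorphism closure of $L^*$ guarantees the resulting model class is closed under isomorphism, consistent with $f$ being a logical operation.

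For $(2)\to(3)$: suppose $\K_f$ is defined by an $L^*$-sentence $\chi$ in the vocabulary $L=\{P_\alpha:\alpha<\beta\}\cup\{P\}$. I want a $\Delta(L^*)$-formula describing $f$. The idea is to recover the ``graph of $f$'' relative to the $P_\alpha$'s. Working in the vocabulary $L^-=\{P_\alpha:\alpha<\beta\}$, consider the formula $\phi(\vec x)$ whose semantic value in a structure $\mn$ (with $P_\alpha^\mn=A_\alpha$) is meant to be $f_M(\langle A_\alpha\rangle)$. I claim $\phi$ is $\Delta(L^*)$: the set $\{\vec a:\vec a\in\text{sem.\ value}\}$ is captured by the $\Sigma(L^*)$-condition ``there is an expansion to $L$ satisfying $\chi$ in which $P(\vec a)$ holds,'' and its complement by ``there is an expansion to $L$ satisfying $\chi$ in which $\neg P(\vec a)$ holds.'' Since $f_M$ is a genuine function of $\langle A_\alpha\rangle$, for each fixed $\langle A_\alpha\rangle$ there is exactly one expansion of $\mn$ to an $L$-structure lying in $\K_f$, so these two $\Sigma(L^*)$-conditions are exact complements; hence $\phi$ is $\Delta(L^*)$-definable (with the one free-variable parameter handled by the standard device of adjoining constants), i.e.\ $f$ is describable in $\Delta(L^*)$.

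The main obstacle is bookkeeping rather than conceptual: making precise that an ``$L^*$-formula describing $f$'' can be manipulated with the Boolean and quantifier operations available in any abstract logic, and correctly handling free variables versus sentences (the excerpt works with sentences for definability of model classes but with formulas-with-free-variables for describability of operations). I would carry out all arguments at the level of sentences in expanded vocabularies with added constants for the free variables, invoking the closure axioms of abstract logics (closure under isomorphism, conjunction, negation, permutation of symbols, and free expansions) and, for the $\Delta$ step, the definition of $\Sigma(L^*)$ as projection; the uniqueness of the expansion witnessing membership in $\K_f$ is exactly what upgrades the pair of $\Sigma(L^*)$-definitions to a $\Delta(L^*)$-definition. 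No $\LS$-type input or the characterization theorems are needed for this lemma.
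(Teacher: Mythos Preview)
Your proposal is correct and follows essentially the same route as the paper: for $(1)\to(2)$ you use $\forall\vec{x}\,(P(\vec{x})\leftrightarrow\phi(\vec{x}))$, and for $(2)\to(3)$ you express membership in the semantic value as $\exists P(\chi\wedge P(\vec{x}))$ and non-membership as $\exists P(\chi\wedge\neg P(\vec{x}))$, relying on the uniqueness of the expansion in $\K_f$---which is exactly the paper's $\exists P(\phi\wedge P(\vec{x}))\equiv\forall P(\phi\to P(\vec{x}))$ argument. Your additional care with free variables (passing to constants in an expanded vocabulary and invoking the closure axioms) is more explicit than the paper's terse treatment, but the mathematical content is identical.
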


\begin{proof}Suppose $f$ is described on $M$ by the formula $\psi(\vec{x})\in L^*$.
$\K_f$ is defined by the sentence $\forall\vec{x}(P(\vec{x})\leftrightarrow\psi(\vec{x},\vec{R})).$ This proves the first implication. Suppose then $\K_f$ is defined by the sentence $\phi\in L^*$ with vocabulary $\{P_\alpha:\alpha<\beta\}\cup\{P\}$. Consider the formula $\psi(\vec{x})\equiv\exists P(\phi\wedge P(\vec{x}))$ with the second order quantifier $``\exists P"$. This clearly describes $f$ but is not, a priori, in $L^*$. However $\psi(\vec{x})\equiv\forall P(\phi\to P(\vec{x}))$ and therefore $\psi(\vec{x})$ is in $\Delta(L^*)$.
\end{proof}

\begin{lemma}Suppose $\K$ is a model class. Then  $\K_\lambda$ is definable in  $L^*$ if and only if 
the logical operation $f^{\K}_\lambda$ is describable in $L^*$. 
\end{lemma}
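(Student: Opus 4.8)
The plan is to prove the two implications separately, in each case reading the needed sentence or formula directly off the given one. Recall that $f^{\K}_\lambda$ is the local operation on sets of cardinality $\lambda$ which, on the interpretations $\vec P^{\mm}$ in a structure $\mm$ in the vocabulary of $\K$ with domain $M$ of cardinality $\lambda$, takes the value $f^{\K}_M(\vec P^{\mm})$, namely $M$ when $\mm\in\K$ and $\emptyset$ otherwise; since $\K$ is closed under isomorphisms this is a logical operation. Recall also that among structures of cardinality $\lambda$ the classes $\K$ and $\K_\lambda$ coincide. The content of the lemma is just that $f^{\K}$ was rigged --- its value is ``all of $M$'' or ``nothing'' according to membership in $\K$ --- so that ``$\phi$ describes $f^{\K}_\lambda$'' and ``$\theta$ defines $\K_\lambda$'' become two phrasings of one requirement; hence I anticipate no real difficulty, only the routine care needed to keep a translated sentence or formula inside $L^*$.

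\emph{From definability to describability.} Suppose $\theta\in L^*$, in the vocabulary of $\K$, defines $\K_\lambda$. I would take as the describing formula $\phi(x)$ the sentence $\theta$ itself, read as a formula in which $x$ occurs idly --- formally, the $L^*$-sentence, in the vocabulary of $\K$ augmented by one new constant, that defines the free expansion of $\Mod(\theta)$, which exists by the free-expansion axiom for abstract logics. Then for any structure $\mm$ of cardinality $\lambda$ in the vocabulary of $\K$, the semantic value of $\phi(x)$ in $\mm$ is $\{a\in M:\mm\models\theta\}$, i.e.\ $M$ if $\mm\models\theta$ and $\emptyset$ otherwise; since $\mm$ has cardinality $\lambda$ this equals $M$ exactly when $\mm\in\K$, so it equals $f^{\K}_M(\vec P^{\mm})$. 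Thus $\phi(x)$ describes $f^{\K}_\lambda$.

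\emph{From describability to definability.} Suppose $\phi(x)\in L^*$, in the vocabulary of $\K$, describes $f^{\K}_\lambda$, so that for every $\mm$ of cardinality $\lambda$ the semantic value of $\phi(x)$ in $\mm$ equals $f^{\K}_M(\vec P^{\mm})$, which is either $M$ or $\emptyset$. I would put $\theta:=\forall x\,\phi(x)$, an $L^*$-sentence since abstract logics are closed under the first-order quantifiers. For $\mm$ of cardinality $\lambda$ we have $\mm\models\theta$ iff every element of $M$ satisfies $\phi$; and as the set of such elements is $M$ or $\emptyset$ and $M\neq\emptyset$, this happens iff that set is $M$, iff $f^{\K}_M(\vec P^{\mm})=M$, iff $\mm\in\K$. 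Hence $\theta$ defines $\K_\lambda$ among structures of cardinality $\lambda$. If one wants $\Mod(\theta)=\K_\lambda$ outright, as the proof of Theorem~\ref{mcgee} arranges, one further conjoins a sentence expressing ``the domain has cardinality $\lambda$'', available in the ambient logics that matter here, e.g.\ $L_{\infty\infty}$ and $\Delta(L_{\infty\omega})$.

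The one step that calls for attention is this bookkeeping: one must settle what ``formula with a free variable'' means in an abstract logic and check that forming $\forall x\,\phi(x)$ and, conversely, reading a sentence as a formula with a spare variable both stay within $L^*$. The former is guaranteed by closure under first-order quantification and the latter by the free-expansion axiom, both part of the standard axiom package of \cite{MR0244013}; with that in hand the lemma is a direct unwinding of the two definitions.
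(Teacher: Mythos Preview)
Your proof is correct and follows essentially the same approach as the paper: use the defining sentence itself as the describing formula (with a dummy variable), and conversely quantify the describing formula to get the defining sentence. The only cosmetic difference is that the paper uses $\exists x\,\psi(x)$ where you use $\forall x\,\phi(x)$; since the semantic value of the describing formula is always all of $M$ or $\emptyset$ and $M\neq\emptyset$, the two quantifiers give the same sentence here, so this is not a genuine divergence.
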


\begin{proof}Suppose $\K_\lambda$ is defined in cardinality $\lambda$ by the sentence $\phi\in L^*$.
$f^{\K}_\lambda$ is defined by the formula $\phi$ (without free variables).  Suppose then $f^{\K}_\lambda$ is defined by the formula $\psi({x})\in L^*$. Then $\K_\lambda$ is defined by the sentence
$\exists {x}\psi({x})$.
\end{proof}

The above two lemmas demonstrate the close relationship between logicality of operations on semantic values and logicality of properties of models. The next lemma, based on the method of \cite[Theorem 5]{MR0244013}, shows that the relationship is not perfect.

\begin{lemma}
There is a logical operation $\bar{f}$ such that $\K^{\bar{f}}$ is definable in $L(Q_0)$ but $\bar{f}$ is not describable in
$L(Q_0)$. 
\end{lemma}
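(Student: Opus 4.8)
The plan is to exploit the asymmetry noted just before the statement: definability of the model class $\K^{\bar f}$ only requires expressing "$P(\vec x)$ holds exactly when $f$ produces a hit at $\vec x$," which can be done with a universally quantified biconditional, whereas \emph{describing} $\bar f$ requires a formula $\psi(\vec x)$ in the same logic whose semantic value is literally $f_M(\langle P_\alpha^\mm\rangle)$ in every model. The second task is strictly harder because it cannot be split across a biconditional. So I would look for a global logical operation $\bar f$ that is "definable as a class by a trick" but whose describing formula would have to genuinely count — and then invoke a known non-definability fact about $L(Q_0)$ to rule out the describing formula. The reference to \cite[Theorem 5]{MR0244013} signals that the right source of such a $\bar f$ is the classical failure of interpolation (or of $\Delta$-closure being trivial) for $L(Q_0)$: there is a $\Sigma(L(Q_0))$ or $\Delta(L(Q_0))$ property that is not itself in $L(Q_0)$.

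Concretely, I would take $\bar f$ to be the global operation associated, via $\K^{\bar f}$, with a property like "the interpretation of a binary predicate $E$ is an equivalence relation with infinitely many classes" or, more robustly, a property that is $\Delta(L(Q_0))$ but provably not $L(Q_0)$-definable. The key steps are: (i) fix such a property $\mathcal P$ of models and let $\K = \Mod(\mathcal P)$; (ii) present $\K$ as $\K^{\bar f}$ for the operation $\bar f$ sending $\vec R \mapsto M$ if $(M,\vec R)\in\K$ and $\emptyset$ otherwise (this is a logical operation iff $\K$ is isomorphism-closed, which it is); (iii) show $\K^{\bar f}$ is $L(Q_0)$-definable — here one uses that the \emph{class} presentation allows an auxiliary predicate $P$ and a biconditional, and one arranges $\mathcal P$ so that with that extra freedom (an added predicate that "witnesses" the needed structure) the class becomes first-order-plus-$Q_0$ definable even though $\mathcal P$ alone is not; (iv) show $\bar f$ is not describable in $L(Q_0)$: a describing formula $\psi(\vec x)$ would, by the second lemma above, give an $L(Q_0)$-definition of $\mathcal P$ itself (take $\exists\vec x\,\psi(\vec x)$, using that $f_M$ outputs all of $M^n$ or $\emptyset$), contradicting the chosen non-definability of $\mathcal P$.

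The main obstacle — and the part requiring care — is step (iii): engineering a property $\mathcal P$ that is simultaneously (a) \emph{not} $L(Q_0)$-definable on its own, so that step (iv) bites, yet (b) becomes $L(Q_0)$-definable once we are allowed the extra predicate $P$ and the biconditional shape of $\K^{\bar f}$'s definition. This is exactly the kind of Skolem-function / witnessing trick behind Theorem 5 of \cite{MR0244013}: one chooses $\mathcal P$ of "$\Sigma_1$ shape over $L(Q_0)$" — definable by an existential second-order quantifier over a predicate whose intended meaning, once supplied as $P^\mm$, reduces the statement to $L(Q_0)$ — while a classical argument (compactness failure patterns, or a direct Ehrenfeucht–Fraïssé / back-and-forth argument for $L(Q_0)$ using that $Q_0$ only detects the $\aleph_0$/finite boundary) shows $\mathcal P$ itself escapes $L(Q_0)$. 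I would therefore pick $\mathcal P$ to say something like "there is a subset $P$ linearly ordered by a given relation with no last element and cofinal in a way that forces a definable infinite set," cross-checking that projecting out $P$ destroys $L(Q_0)$-definability but that the unprojected version with $P$ present is plainly in $L(Q_0)$. Once $\mathcal P$ is fixed with these two properties verified, steps (i), (ii), and (iv) are routine applications of the two preceding lemmas and the isomorphism-closure criterion for $\K^{\bar f}$.
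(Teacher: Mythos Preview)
Your plan has a genuine gap at step (iii), and in fact steps (iii) and (iv) are mutually incompatible for the operation you choose. You take $\bar f = f^{\K}$, so that $f_M(\vec R)$ is always either all of $M$ or $\emptyset$. For this choice the extra predicate $P$ in $\K_{\bar f}=\{(M,\vec R,P): P=f_M(\vec R)\}$ carries only a single bit of information; it cannot act as the structural ``witness'' you describe. Concretely: suppose $\K_{\bar f}$ were defined in $L(Q_0)$ by a sentence $\phi$ in the vocabulary $\{\vec R,P\}$. Replace every atomic subformula $P(t)$ in $\phi$ by $t=t$, obtaining $\phi^+\in L(Q_0)$ in the vocabulary $\{\vec R\}$. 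Then $(M,\vec R)\models\phi^+$ iff $(M,\vec R,M)\models\phi$ iff $f_M(\vec R)=M$ iff $(M,\vec R)\in\K$. So $\K$ itself would be $L(Q_0)$-definable, contradicting precisely the hypothesis you need for (iv). The ``auxiliary predicate plus biconditional'' freedom you appeal to is illusory for operations of the form $f^{\K}$: the added $P$ is the \emph{output} of the operation, and for $f^{\K}$ that output is trivial.

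What the lemma really encodes---and what the reference to Lindstr\"om's method is pointing at---is the failure of the \emph{Beth definability property} for $L(Q_0)$. One should start not from a model class $\K$ but from an $L(Q_0)$-sentence $\phi$ in a vocabulary $\{\vec R,P\}$ that \emph{implicitly} defines $P$ (for every $(M,\vec R)$ there is exactly one $P$ with $(M,\vec R,P)\models\phi$) while $P$ admits no \emph{explicit} $L(Q_0)$-definition from $\vec R$. Setting $f_M(\vec R)$ equal to that unique $P$ yields a logical operation $\bar f$ whose associated class $\K_{\bar f}$ is defined by $\phi$ itself; a describing formula $\psi(\vec x)$ for $\bar f$ would be exactly an explicit definition of $P$, which by construction does not exist. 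Producing such a $\phi$ and running the back-and-forth argument that rules out any explicit definition is the substantive work (this is where Lindstr\"om's construction enters). Your proposal never reaches this point because it commits to $\bar f=f^{\K}$ at step (ii), which forecloses the only route that can succeed.
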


Bonnay \cite{MR2395046} suggests that the concept of potential isomorphism is a better criterion for logicality than isomorphism itself. 
For potential isomorphism closure there is  the following version of McGee's Theorem:

\begin{theorem}[\cite{MR0342370}]\label{baarwise}
If $\K$ is a class of models of the same  vocabulary, then the following conditions are equivalent:
\begin{enumerate}
\item $\K$ is closed under potential isomorphisms.
\item $\K$ is CD-definable in $L_{\infty\omega}$.
\end{enumerate}
\end{theorem}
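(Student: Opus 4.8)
My plan is to mirror the proof of Theorem~\ref{mcgee} almost verbatim, with $L_{\infty\omega}$ replacing $L_{\infty\infty}$: where that proof invokes a sentence of $L_{\lambda^+\lambda^+}$ describing a model up to isomorphism, I would use a \emph{Scott sentence} of $L_{\infty\omega}$, which describes a model up to potential isomorphism. The two inputs I would record first are classical. First, potentially isomorphic structures are $L_{\infty\omega}$-equivalent --- one runs a back-and-forth system through the (finite-quantifier) syntax of $L_{\infty\omega}$ by induction on formulas; by Karp's theorem potential isomorphism and $\equiv_{\infty\omega}$ in fact coincide. Second, every structure $\mm$ has a Scott sentence $\sigma_{\mm}\in L_{\infty\omega}$ such that for all $\mn$, $\mn\models\sigma_{\mm}$ iff $\mn$ is potentially isomorphic to $\mm$; $\sigma_{\mm}$ is built by the transfinite recursion on the back-and-forth formulas of $\mm$, and since only set-many of those arise, $\sigma_{\mm}$ is a legitimate $L_{\infty\omega}$-sentence (set-sized conjunctions and disjunctions, finite quantifier strings).

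For $(2)\Rightarrow(1)$ I would argue directly. Suppose each $\K_\lambda$ is defined by $\phi_\lambda\in L_{\infty\omega}$. If $\mm\in\K$ with $|\mm|=\lambda$ is potentially isomorphic to $\mn$, then $\mm\models\phi_\lambda$, hence $\mn\models\phi_\lambda$ by the first input, hence $\mn\in\K_\lambda\sub\K$. So $\K$ is closed under potential isomorphism. (Note this direction says nothing about $|\mn|$.)

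For $(1)\Rightarrow(2)$ I would fix $\lambda$, let $R=\{\mm\in\K:\dom(\mm)=\lambda\}$ --- a set --- and put $\phi_\lambda=\bigvee_{\mm\in R}\sigma_{\mm}$, which is again an $L_{\infty\omega}$-sentence (a set-indexed disjunction). Any member of $\K$ of size $\lambda$ is isomorphic to some element of $R$ (transport its domain onto $\lambda$) and hence satisfies $\phi_\lambda$; conversely any model of $\phi_\lambda$ satisfies some $\sigma_{\mm}$ with $\mm\in R\sub\K$, so it is potentially isomorphic to a member of $\K$ and therefore, by $(1)$, itself in $\K$. Thus on the class of $\lambda$-sized structures, $\phi_\lambda$ and membership in $\K$ agree --- which is the required CD-definability.

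The step I expect to need the most care --- and the point at which the precise reading of ``CD-definable'' matters --- is the cardinality bookkeeping in $(1)\Rightarrow(2)$. Potential isomorphism does not preserve cardinality: any two infinite pure sets are potentially isomorphic, so $\Mod(\phi_\lambda)$ will in general also contain structures of cardinality $\ne\lambda$, which by the same back-and-forth argument still lie in $\K$ but not in $\K_\lambda$. Hence ``$\K_\lambda$ is $L_{\infty\omega}$-definable'' has to be read as ``$\K_\lambda$ is defined by an $L_{\infty\omega}$-sentence \emph{within} the class of $\lambda$-sized structures'', and it is only in that relativized sense that the construction above delivers (2). By contrast, in Theorem~\ref{mcgee} the sentences $\theta_\alpha\in L_{\lambda^+\lambda^+}$ also fix the cardinality, so one obtains the literal equality $\K_\lambda=\Mod(\bigvee_\alpha\theta_\alpha)$; $L_{\infty\omega}$, having only finite quantifier strings, cannot express ``there are exactly $\lambda$ elements'' and so cannot do this. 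Beyond this caveat the proof is routine, the genuine ingredient being the Scott--Karp analysis of $L_{\infty\omega}$, which I would cite rather than reprove.
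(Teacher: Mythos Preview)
The paper does not give a proof of this theorem; it is simply cited from Barwise. Your approach via Karp's theorem and Scott sentences is the standard one, and your $(1)\Rightarrow(2)$ argument is correct under the relativized reading you spell out in your final paragraph.

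There is, however, a genuine gap. Your $(2)\Rightarrow(1)$ argument concludes ``$\mn\in\K_\lambda$'' from ``$\mn\models\phi_\lambda$'', and that inference is only valid under the \emph{global} reading $\K_\lambda=\Mod(\phi_\lambda)$. You then argue---correctly---that $(1)\Rightarrow(2)$ can only deliver the \emph{relativized} reading. So your two halves do not match: what you have actually shown is $(2)_{\mathrm{global}}\Rightarrow(1)\Rightarrow(2)_{\mathrm{relativized}}$, not an equivalence. Worse, neither consistent reading yields the biconditional. Under the relativized reading, $(2)\Rightarrow(1)$ is false: take $\K$ to be the class of all at-most-countable structures in the empty vocabulary; with $\phi_\lambda$ a tautology for $\lambda\le\aleph_0$ and a contradiction otherwise, (2) relativized holds, yet a countably infinite pure set and an uncountable one are potentially isomorphic while only the former lies in $\K$. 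Under the global reading, $(1)\Rightarrow(2)$ fails already for $\K$ the class of \emph{all} structures: $\K_{\aleph_0}$ would have to equal $\Mod(\phi_{\aleph_0})$ for some $\phi_{\aleph_0}\in L_{\infty\omega}$, hence be closed under potential isomorphism, yet it contains a countably infinite pure set and excludes every uncountable one.

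So the issue you flag is not a minor caveat about how to read the conclusion; it reflects an imprecision in the paper's formulation that your own argument exposes but does not resolve. One clean repair is to replace ``$\K_\lambda$ is $L_{\infty\omega}$-definable'' by ``there is $\phi_\lambda\in L_{\infty\omega}$ with $\K_\lambda\subseteq\Mod(\phi_\lambda)\subseteq\K$''. With that formulation both of your arguments go through verbatim and do combine into an equivalence: for $(2)\Rightarrow(1)$ you get $\mn\in\Mod(\phi_\lambda)\subseteq\K$ without needing $|\mn|=\lambda$, and for $(1)\Rightarrow(2)$ your disjunction of Scott sentences satisfies the two inclusions by exactly the reasoning you gave.
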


%Of course, logicality across domains remains a problem even if we get rid of cardinalities; that is, potential isomorphism does not settle Feferman's third objection to the Tarski-Sher criterion, that it leaves unexplained how logicality is to be understood across domains of different sizes.

%Another 
A problem with closure under potential isomorphism is that well-ordered structures
$(\alpha,\in,P_1,\ldots,P_n)$ and $(\beta,\in,P'_1,\ldots,P'_n)$ 
are never potentially isomorphic if $\alpha\ne\beta$. Just as an isomorphism closed model class can have models with domains of different cardinalities and the isomorphism closure gives no information across the domains, a potential isomorphism closed model class can have models $(\alpha,\in,P_1,\ldots,P_n)$ with different $\alpha$ and the potential  isomorphism closure also gives no information across such domains.

\subsection{The model-theoretic properties of $L_{\infty\infty}$ }\label{eww}

We now note the different respects in which  $L_{\infty\infty}$ deviates from first order logic. First of all, $L_{\infty\infty}$ is badly nonabsolute.\footnote{For the technical definition of absoluteness see Section~\ref{abs}.} For example, the sentence
\begin{equation}\label{nonabs}
\begin{array}{l}
\forall x\forall y(\forall z(E(z,x)\leftrightarrow E(z,y))\to x=y)\wedge\\
\forall x_0\forall x_1\ldots\exists y\forall z(E(z,y)\leftrightarrow\bigvee_{n<\omega}z=x_n)
\end{array}\end{equation} of $L_{\infty\infty}$ which has models exactly in cardinalities $\mu$ such that $\mu^\omega=\mu$, is highly non-absolute.\footnote{By K\"onig's Theorem (\cite[Theorem 5.10]{MR1940513}) such a $\mu$ cannot have cofinality $\omega$ and cofinality is not absolute in set theory.} Other failures of absoluteness can be generated by  replacing $\omega$ here by any regular cardinal.

A second important model-theoretic property of first order logic is its downward  L\"owenheim-Skolem theorem. Consider  the above (\ref{nonabs}). Such a $\mu$ cannot have cofinality $\omega$ and thus $L_{\infty\infty}$ does not have a L\"owenheim-Skolem theorem in the same strong sense as first order logic, or even in the sense of, e.g., $L_{\infty\omega}$ (see Theorem~\ref{ls}).  That is, $L_{\infty\infty}$ can ``omit" cardinals of cofinality $\omega$ in this special sense given by the L\"owenheim-Skolem theorem.  
%
%In an obvious way, having a L\"owenheim-Skolem theorem forges a connection between different cardinalities of models in definable model classes, if it might not even be said to collapse them into a single cardin. 
In contrast,  a typical consequence of a L\"owenheim-Skolem theorem of $L_{\kappa^+\omega}$ is that if a definable model class has a model of cardinality $\lambda>\kappa$ then it has models of all cardinalities $\mu$ such that $\kappa\le\mu\le\lambda$ (Theorem~\ref{ls}), whatever their cofinality.  
%For $L_{\kappa^+\kappa^+}$ this holds for cardinals $\mu$ with $\mu^\kappa=\mu$ only, again leaving out all $\mu$ with cofinality $<\kappa$.

The logic behind Theorem~\ref{mcgee}, $L_{\kappa^+\kappa^+}$,  fails in a strong sense to have a Completeness Theorem. By a result of Dana Scott (published in \cite{MR0176910}) the set of valid sentences of $L_{\kappa^+\kappa^+}$ is not $L_{\kappa^+\kappa^+}$-definable over $H(\kappa^+)$, the set of sets of hereditary cardinality $\le\kappa$. In contrast, the set of valid sentences of $L_{\kappa^+\omega}$ is $\Sigma_1$-definable over $H(\kappa^+)$ \cite{MR0176910}, reminiscent of the Completeness Theorem of first order logic which implies that the set 
of (G\"odel numbers of)  valid sentences of first order logic is recursively enumerable i.e. $\Sigma_1$ over $HF$, the set of hereditarily finite sets. In fact, C. Karp gives a complete axiomatization for $L_{\kappa^+\omega}$ in \cite{MR0176910}.

%????In an obvious way, having a L\"owenheim-Skolem theorem forges a connection between different cardinalities of models in definable model classes. If we think of the spectrum $\sp(\K)$ of a model class $K$, namely the class of cardinalities of the models in the class $K$, then an auxiliary  concept emerges here: that of the {\em L\"owenheim spectrum} of the class, $\sp(\K)_L$, namely the class of the cardinalities to which the sentence is not indifferent as given by the L\"owenheim-Skolem theorem. In the case of first order logic, the spectrum of models is very regular, while in the above .  Conversely, regular patterns in cardinalities of models in a model class can be considered indications that the model class is actually definable in some logic with a reasonable  L\"owenheim-Skolem theorem. HOW DOES A REGULAR SPECTRUM GIVE THESE INDICATIONS???? 

In section~\ref{abs} we replace  $L_{\infty\infty}$ by a logic which is (almost) absolute and which has a strong L\"owenheim-Skolem theorem, together with other desirable properties, though it is still the case that the definition is given relative to the size of the models in the class. 
%\marginpar{repetition with end of Sect. \ref{eww}?} In section \ref{sort} below we give a theorem which does not rely on this assumption, namely a we present a logic in which any model class is definable irrespective of the cardinality of the models in the class.  
%\medskip

%PUT this somewhere else: \noindent{\bf Notation:} We use $\mm,\mn$ etc to denote structures and the corresponding latin letters $M,N$ etc to denote the domains of the corresponding structure. The domain is always assumed to be a set. The cardinality of a set $M$ is denoted $|M|$.

\section{L\"owenheim-Skolem theorems viewed through the spectra of model classes}\label{spec}

The fact that the class of all models of cardinality $\kappa$, for example,  satisfies the Tarski-Sher criterion, irrespective of what $\kappa$ is, raises the question, is cardinality a logical property? We saw that in the more subtle case of $\K_{\mbox{\tiny lim}}$ the property of a model of being of a limit cardinality is, according to this criterion, logical; by modifying $\K_{\mbox{\tiny lim}}$ one can generate many other cases. For example, if $A$ is a property of natural numbers, we can consider the class $\K$ of models of cardinality $\aleph_n$, such that $n$ has property $A$.  The property of a model of belonging to $\K$ is a logical property of the model by the Tarski-Sher criterion, even though to judge whether a particular model is in $\K$ one has to first determine whether the size of the universe is some $\aleph_n$, and after that one has to determine whether $n$ has property $A$. This stretches the concept of logicality and entangles it with the concept of what is mathematical. Earlier writers have observed this, so we are not saying essentially anything new here. 

Of Feferman's three objections to the Tarski-Sher criterion of logicality, the third is that it leaves unexplained how logicality is to be understood across domains of different sizes. The existential quantifier, for example, is, in a sense, the same on any domain of any cardinality, but this is not true of many properties which satisfy the Tarski-Sher criterion, for example $\K_{\mbox{\tiny lim}}$. We suggest that in order that a property of models manifests a greater degree of logicality than is provided by the Tarski-Sher criterion, it should have a definition by a sentence in $L_{\infty\infty}$ or in some other logic in such a way that {\em the same sentence defines the model class in as many cardinalities as possible}.\footnote{Note that $\K_{\mbox{\tiny lim}}$ has the same first order definition on the (closed unbounded) class of \emph{all} limit cardinals.} We explore this suggestion below via the concept of a spectrum. Our second suggestion is that a higher grade of logicality would be assigned to the degree to which the logic in question had ``first-order-like" model-theoretic properties: absoluteness, a L\"owenheim-Skolem theorem, and completeness. $L_{\infty\infty}$ would thus be given a low grade on the logicality scale, being non-absolute,  having a limited L\"owenheim-Skolem theorem and failing to have a Completeness Theorem. We explore this suggestion in section~\ref{compl}\footnote{For a different analysis of domain relativity, see \cite{Westerstahl2017-WESS-5}.}.

%We now introduce  the concept of a spectrum.

\subsection{Spectra}\label{spe}
In analysing the structure of a model class, in order to determine whether it is in some sense logical or not or to find out whether it is definable in some interesting logic, it is useful to investigate the cardinalities of models in the class. One might think that mere cardinalities are too rough a measure of any form of logicality but this is, in fact, surprisingly informative.
This leads naturally  to the concept  of a spectrum:\footnote{See also \cite{MR3794871}.} 

\begin{definition}
If $\K$ is a model class, the \emph{spectrum} of $\K$ is the class $\sp(\K)$ of cardinalities of models in $\K$ i.e.  $$\sp(\K)=\{|M| : \mm\in \K\}.$$
\end{definition}

Depending on $\K$, the spectrum can be a singleton, an interval of cardinals, an initial (or final) segment of the class of all cardinals, or something more complicated, such as the class of all limit cardinals, or all limit cardinals of cofinality $\omega$ (see Figure~\ref{pic2}). Even the patterns of finite numbers in spectra of first order sentences is highly interesting \cite{MR77468,MR485317}. However, we are here concerned with infinite cardinals in a spectrum. 

\begin{figure}
\begin{center}
\includegraphics[height=5cm]{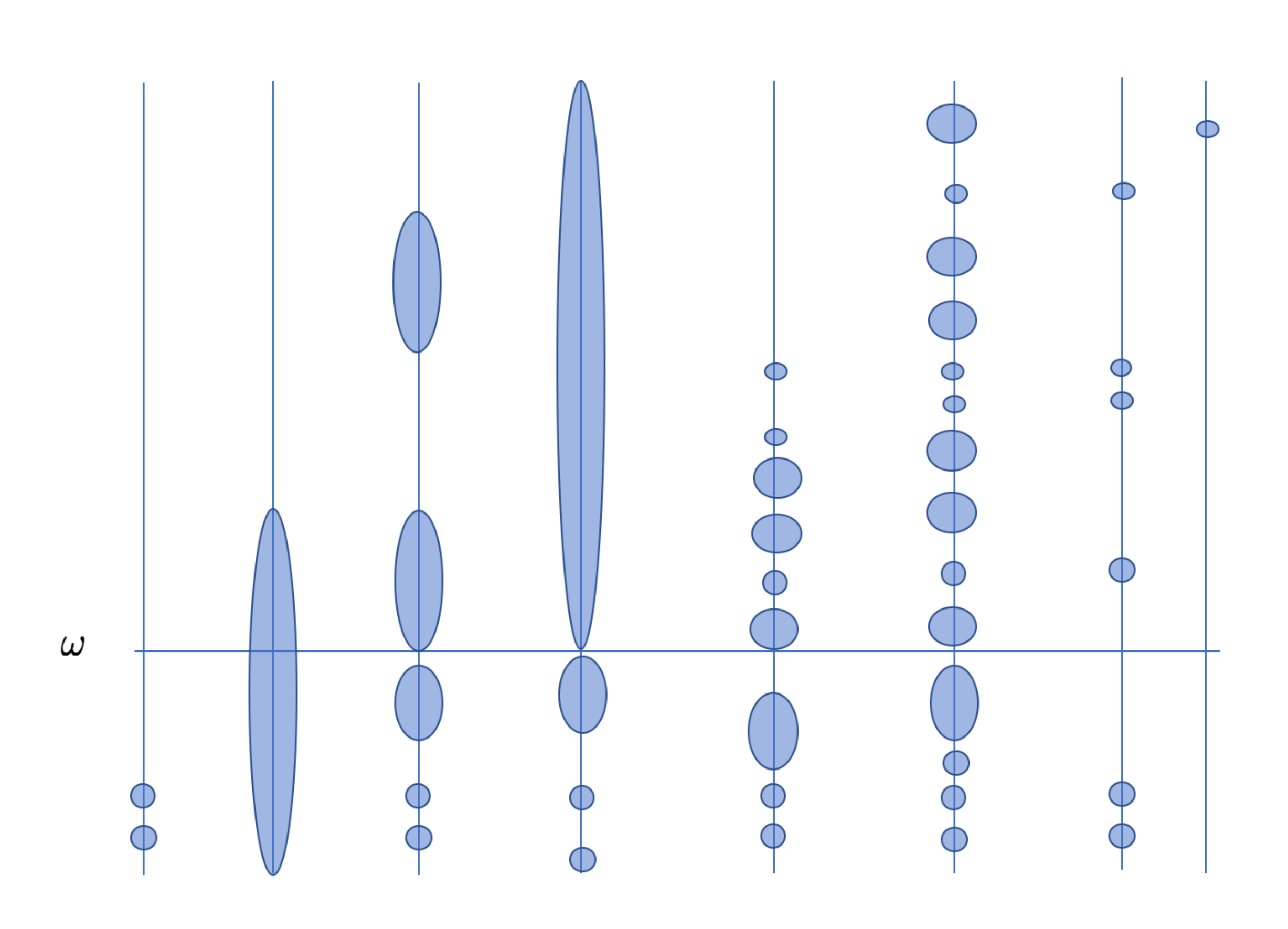}
\caption{\label{pic2}Spectra of some model classes.}
\end{center}
\end{figure}

The  property of a logic which reflects regularity patterns in its spectra is captured by the  L\"owenheim-Skolem Theorem.  The spectrum gives  indirect information of the possibility that the model class is definable in some logic. Roughly speaking, if the logic has a strong L\"owenheim-Skolem property,  then the spectra of definable model classes reflect this. If every sentence in the logic which has an infinite model has also a countably infinite model, the most famous case of a L\"owenheim-Skolem property, then every spectrum with an infinite cardinal in it has also $\aleph_0$ in it.

Skolem proved that first order logic satisfies $\LS([\aleph_0,\infty),\{\aleph_0\})$ (see Definition~\ref{lnum}) giving rise to the so-called Skolem Paradox: countable first order theories such as set theory have countable models if they have models at all. Now, one hundred years after Skolem's discovery, the indifference of first order logic to the infinite cardinality of its models is often considered a positive rather than negative aspect. Indifference to cardinality is of course closely related to set-theoretical absoluteness and thereby to a desirable quality of logicality. In consequence, the stronger form of $\LS(C,D)$ a logic satisfies the more appropriate the logic is for expressing logical properties.

If a logic satisfies $\LS(C,D)$, there are consequences for the spectra of definable model classes.  Suppose $\K$ is definable in a logic with $\LS(C,D)$. Then we can make the following conclusions: If there is $\mm\in\K$ with $|M|\in C$, then there is $\mn\in\K$ with $|N|\in D$. On the other hand, if $\K$ contains a model of cardinality $\kappa$ but no models of cardinality $\lambda$, then $\K$ cannot be definable in a logic with $\LS(C,D)$ such that $\kappa\in C$ and $\lambda\notin D$. The point is that by looking at the spectrum of $\K$ we can make inferences about its definability in different logics.
Thus, if we are given a model class $\K$ but no logic in which it would be definable 
(apart from the trivial $L(Q_\K)$), and we can discern regular patterns in $\sp(\K)$, we may take it as an indirect indication (albeit not a proof) that $\K$ is definable in a logic with $\LS(C,D)$ for some $C$ and $D$ explaining the found patterns. On the other hand, if $\sp(\K)$ does not seem to have regular patterns, we may take it as an indication that no such logic can be found. Thus $\sp(\K)$ gives implicit information about the possibility of finding a syntax for $\K$.

Let us look at the L\"owenheim-Skolem Properties of the logics $L_{\infty\omega}$ and $L_{\infty\infty}$.\footnote{Recall that $L_{\infty\omega}=\bigcup_{\kappa}L_{\kappa^+\omega}$ and 
$L_{\infty\infty}=\bigcup_{\kappa}L_{\kappa^+\kappa^+}$.}
We first recall a basic construction in infinitary logic, based on a class of  important formulas due to D. Scott \cite{MR0200133}. For any ordinal $\alpha $ let the formula $\eta_\alpha(x)$ with one free variable $x$ and a binary relation symbol $<$ be defined, by transfinite recursion, as follows:
\begin{equation}\label{eta}
\eta_\alpha(x)\leftrightarrow\forall y(y<x\to\bigvee_{\beta<\alpha}\eta_\beta(y))
\wedge\bigwedge_{\beta<\alpha}\exists y(y<x\wedge\eta_\beta(y)).
\end{equation} Then for a linear order $(A,<)$ and $a\in A$ we have 
\begin{equation}\label{etaA}
(A,<)\models\eta_\alpha(a)\iff(\{b\in A:b<a\},<)\cong(\alpha,<).
\end{equation}
Let \begin{equation}\label{etap}
\eta'_\alpha\leftrightarrow\forall y\bigvee_{\beta<\alpha}\eta_\beta(y)
\wedge\bigwedge_{\beta<\alpha}\exists y\ \eta_\beta(y).
\end{equation}
For a linear order $(A,<)$ we have 
\begin{equation}\label{etapA}
(A,<)\models\eta'_\alpha\iff(A,<)\cong(\alpha,<).
\end{equation}
Now $\eta'_\alpha$ is in $L_{\kappa\omega}$ whenever $\alpha<\kappa$. It follows that by combining the sentences $\eta'_\alpha$, the logic $L_{\kappa\omega}$ can manifest totally arbitrary patterns of spectra in cardinalities below $\kappa$, roughly for the same reason that any finite set of finite numbers can be the spectrum of a first order sentence. More exactly, if $X$ is an arbitrary  set of cardinal numbers below $\kappa$, then $$\bigvee_{\lambda\in X}\eta'_\lambda\in L_{\kappa^+\omega}\mbox{ and }X=\sp(\bigvee_{\lambda\in X}\eta'_\lambda).$$ This shows that when dealing with extensions of $L_{\kappa\omega}$ it makes sense to focus on cardinals $\ge\kappa$. 

The basic facts about the $\LS(C,D)$ type properties of infinitary languages are the following:

\begin{theorem}[\cite{MR0539973}]\label{ls} Suppose $\kappa\le\lambda$ are cardinals and $\kappa$ is regular. 
\begin{enumerate}
\item $L_{\kappa^+\omega}$ satisfies $\LS([\lambda,\infty)\},\{\mu\})$ for all $\mu$ such that $\kappa\le\mu\le \lambda$.
\item $L_{\kappa^+\kappa^+}$ satisfies $\LS([\lambda,\infty),\{\mu\})$ for all $\mu$ such that $\kappa\le\mu\le \lambda$ and $\mu^\kappa=\mu$.
\item $L_{\kappa^+\omega}$ satisfies $\LS([\beth_{(2^\kappa)^+},\infty),[\mu,\infty))$ for all $\mu$.
\item $L_{\omega_1\omega_1}$ does not satisfy ``For all $\mu$ $\LS([\lambda,\infty),[\mu,\infty))$" for any $\lambda$ below the first inaccessible cardinal (assuming there are inaccessible cardinals).
\end{enumerate}
\end{theorem}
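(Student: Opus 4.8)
The four items fall into three essentially independent arguments. I would prove \textbf{(1)} and \textbf{(2)} by the usual downward Löwenheim--Skolem / Skolem-hull construction. The preliminary observation is that a single sentence $\phi$ of $L_{\kappa^+\kappa^+}$ (resp.\ of $L_{\kappa^+\omega}$) has a well-founded formation tree with branching $\le\kappa$, hence rank $<\kappa^+$, hence at most $\kappa$ subformulas; write $\mathcal{F}$ for this (size-$\le\kappa$) set of subformulas. Fix $\mm\models\phi$ with $|M|\ge\lambda\ge\mu$. One builds a continuous increasing chain $\langle X_\alpha\rangle$ of subsets of $M$ with $|X_0|=\mu$, adding at each successor step, for every $\mathcal{F}$-subformula $\exists\bar x\,\psi(\bar x,\bar y)$ and every appropriate tuple $\bar a$ from $X_\alpha$ with $\mm\models\exists\bar x\,\psi(\bar x,\bar a)$, a witnessing tuple $\bar b$, and also closing under the function symbols of the vocabulary. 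In case \textbf{(1)} the quantifier blocks are finite, so a chain of length $\omega$ suffices and each $X_n$ has size $\le\mu+\kappa\cdot\mu=\mu$. In case \textbf{(2)} the final union must be closed under $\le\kappa$-tuples, so one iterates along $\kappa^+$ --- regularity of $\kappa^+$ then puts every $\le\kappa$-tuple of $N:=\bigcup_{\alpha<\kappa^+}X_\alpha$ inside some $X_\alpha$ --- and invokes the Tarski--Vaught criterion for $L_{\kappa^+\kappa^+}$; here the hypothesis $\mu^\kappa=\mu$ both keeps $|X_{\alpha+1}|\le\mu+\mu^\kappa\cdot\kappa=\mu$ and, since it forces $\mu\ge 2^\kappa\ge\kappa^+$, yields $|N|\le\kappa^+\cdot\mu=\mu$. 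An induction on $\mathcal{F}$ verifies the Tarski--Vaught test, so $\mn\prec_{\mathcal{F}}\mm$; in particular $\mn\models\phi$ and $|N|=\mu$, as required. (The upper bound $\mu\le\lambda$ is used only to guarantee $|M|\ge\mu$.)

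For \textbf{(3)} I would carry out the Morley-style Hanf-number computation via Ehrenfeucht--Mostowski models. Given $\phi\in L_{\kappa^+\omega}$ with a model $\mm$ of cardinality $\ge\beth_{(2^\kappa)^+}$, fix a fragment $\mathcal{F}\ni\phi$ of size $\le\kappa$ and Skolemize: expand $\mm$ to $\mm^+$ in a vocabulary $L^+$ obtained by adding, for each $\mathcal{F}$-formula $\exists x\,\psi(x,\bar y)$, a Skolem function symbol $f_\psi$ with $\psi(f_\psi(\bar a),\bar a)$ whenever $\exists x\,\psi(x,\bar a)$; then $|L^+|\le\kappa$. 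There are only $\le 2^\kappa$ quantifier-free $L^+$-types of finite tuples, so, fixing a linear order of $M$ and colouring $<$-increasing $n$-tuples by their type, the Erd\H{o}s--Rado theorem (applied at $\beth_{(2^\kappa)^+}$, which exceeds $\beth_n((2^\kappa))^+$ for every $n<\omega$) produces an infinite set $I$ of $L^+$-indiscernibles. For any cardinal $\theta$ let $\mn^+$ be the Ehrenfeucht--Mostowski model, namely the Skolem hull of a $\theta$-indexed indiscernible family realizing the EM-type of $I$; then $|\mn^+|=\max(\theta,\kappa)\ge\theta$. The main obstacle is to see that $\mn^+\restriction L\models\phi$: one proves by induction on $\mathcal{F}$-formulas that inside a Skolem hull the truth of $\psi(t_1(\bar a),\dots,t_k(\bar a))$ (with Skolem-term parameters over indiscernibles $\bar a$) is determined by the quantifier-free $L^+$-type of $\bar a$ --- the existential clause uses that a witness may be taken to be a Skolem term, and the $\le\kappa$-ary conjunction clause is harmless since a conjunction of conditions each depending only on that type still does --- whence, $\phi\in\mathcal{F}$ being a sentence and $\mm^+\models\phi$, the same recursion evaluated in $\mn^+$ gives $\mn^+\models\phi$. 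Thus $\phi$ has models of cardinality $\ge\mu$ for every $\mu$, which is $\LS([\beth_{(2^\kappa)^+},\infty),[\mu,\infty))$.

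For the negative statement \textbf{(4)} I would exhibit, for each $\lambda$ below the first inaccessible $\mathcal{I}$, one sentence $\phi\in L_{\omega_1\omega_1}$ with a model of cardinality $\ge\lambda$ but none of cardinality $\ge\mathcal{I}^+$; this already falsifies ``$\LS([\lambda,\infty),[\mu,\infty))$ for all $\mu$''. Let $\phi$ assert that $(M,E)$ is extensional, that $E$ is well-founded --- expressible in $L_{\omega_1\omega_1}$ (though not in $L_{\omega_1\omega}$) by $\neg\exists x_0\exists x_1\cdots\bigwedge_{n<\omega}E(x_{n+1},x_n)$ --- and that $(M,E)$ satisfies a suitable finite fragment $T_0$ of $\mathrm{ZFC}$ (including Powerset, Choice, and enough Separation and Replacement to compute ranks and cardinalities correctly) together with the first-order sentence ``there is no inaccessible cardinal''. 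By Mostowski collapse any model of $\phi$ is isomorphic to a transitive $(X,\in)$ of the same cardinality with $X\models T_0+$``no inaccessible''. On one hand, by reflection inside $V_{\mathcal{I}}$ (which models $\mathrm{ZFC}$ since $\mathcal{I}$ is inaccessible) there are club-many $\alpha<\mathcal{I}$ with $V_\alpha\models T_0$, each such $V_\alpha$ satisfies ``no inaccessible'' (its ordinals lie below $\mathcal{I}$, the first inaccessible), and $|V_\alpha|\ge|\alpha|$; since $\mathcal{I}$ is a limit cardinal these cardinalities are cofinal in $\mathcal{I}$. On the other hand, if $X\models\phi$ had $|X|>\mathcal{I}$ then, since $|V_\beta|<\mathcal{I}$ for $\beta<\mathcal{I}$, we would have $\mathrm{Ord}\cap X=\mathrm{rank}(X)>\mathcal{I}$, so $\mathcal{I}\in X$; but regularity and strong-limitness of $\mathcal{I}$ are computed correctly in $X$ (cofinalities can only increase in a transitive submodel, and $(\mathcal{P}(\beta))^X\subseteq\mathcal{P}(\beta)$ has real cardinality $<\mathcal{I}$ for $\beta<\mathcal{I}$), so $X\models$``$\mathcal{I}$ is inaccessible'', contradicting $X\models$``no inaccessible''. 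Hence $\sp(\phi)$ is cofinal in $\mathcal{I}$ yet bounded by $\mathcal{I}$, which is exactly what is needed. The delicate points here are the downward absoluteness of ``inaccessible'' to transitive models of $T_0$, and the choice of a finite $T_0$ strong enough to support that argument.
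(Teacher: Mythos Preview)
The paper does not give its own proof of this theorem: it is quoted from Dickmann's monograph and used as background. Your arguments for (1)--(3) are the standard ones (Skolem hulls for the downward direction, the Morley/Erd\H{o}s--Rado construction of Ehrenfeucht--Mostowski models for the Hanf-number bound) and are correct as sketched; there is nothing to compare against in the paper itself.

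For (4) the paper does, later in Section~\ref{abs}, spell out exactly the Silver argument you give: the sentence is the conjunction of a large enough finite fragment $T_0$ of $\ZFC$, the first-order statement ``there is no inaccessible cardinal'', and the $L_{\omega_1\omega_1}$ well-foundedness sentence~(\ref{wf}); Mostowski collapse then forces any model to be (isomorphic to) a transitive set in which the first inaccessible would be visible, bounding the spectrum at the first inaccessible. Your version is slightly more detailed than the paper's in that you use reflection inside $V_{\mathcal I}$ to get models of cardinality cofinal in $\mathcal I$, whereas the paper simply takes $V_{\mathcal I}$ itself (of cardinality $\mathcal I$) as the large model; either works. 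Your flagged ``delicate points''---downward persistence of inaccessibility to transitive models of $T_0$, and the need for $T_0$ to compute ranks---are exactly the points the paper leaves implicit.
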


For a class size logic such as $L_{\infty\omega}$ or $L_{\infty\infty}$ the $\LS(C,D)$ properties hold  via their connection to $L_{\kappa\omega}$ or $L_{\kappa\lambda}$: If $\phi\in L_{\infty\infty}$, then $\phi\in L_{\kappa\lambda}$ for some (least) $\kappa$ and $\lambda$ and then Theorem~\ref{ls} applies.
%Suppose $\mm\models\phi$. Then there is $\mn\models\phi$ such that $|N|\le |M|\cdot\kappa$, where $\kappa$ is the least $\kappa$ such that $\phi\in H(\kappa^+)$. $L_{\infty\omega}$ satisfies this, $L_{\infty\infty}$ not. 

The above theorem shows that the spectra of $L_{\infty\infty}$ are much more complicated than the spectra of $L_{\infty\omega}$. For example, let $\theta$ be the sentence (\ref{nonabs}) of $L_{\omega_1\omega_1}$ which has a model of cardinality $\mu$ if and only if $\mu^\omega=\mu$.   Thus $\sp(\theta)$ (i.e. $\sp(\Mod(\theta))$) is full of ``holes" as it misses all cardinals, such as e.g. each $\aleph_{\alpha+\omega}$, that are $\omega$-cofinal. Whereas the spectra of sentences of $L_{\kappa\omega}$ cannot have such holes above $\kappa$.
(See Figure~\ref{pic1}.)

What does it reveal about logicality if the spectrum is full of  `holes'? {\em It suggests that we are very far from the situation in which we can claim we have the same logical operation independently of the domain}. In the case of $L_{\kappa^+\omega}$ the spectrum is (above $\kappa$) a homogeneous segment of cardinals, either bounded by 
$\beth_{(2^\kappa)^+}$ or unbounded, and we are closer to having  the same logical operation independently of the domain. Thus we judge a model class definable in $L_{\infty\omega}$ as having a greater degree of logicality than a model class definable in $L_{\infty\infty}$.
\begin{figure}
\begin{center}
\includegraphics[height=5cm]{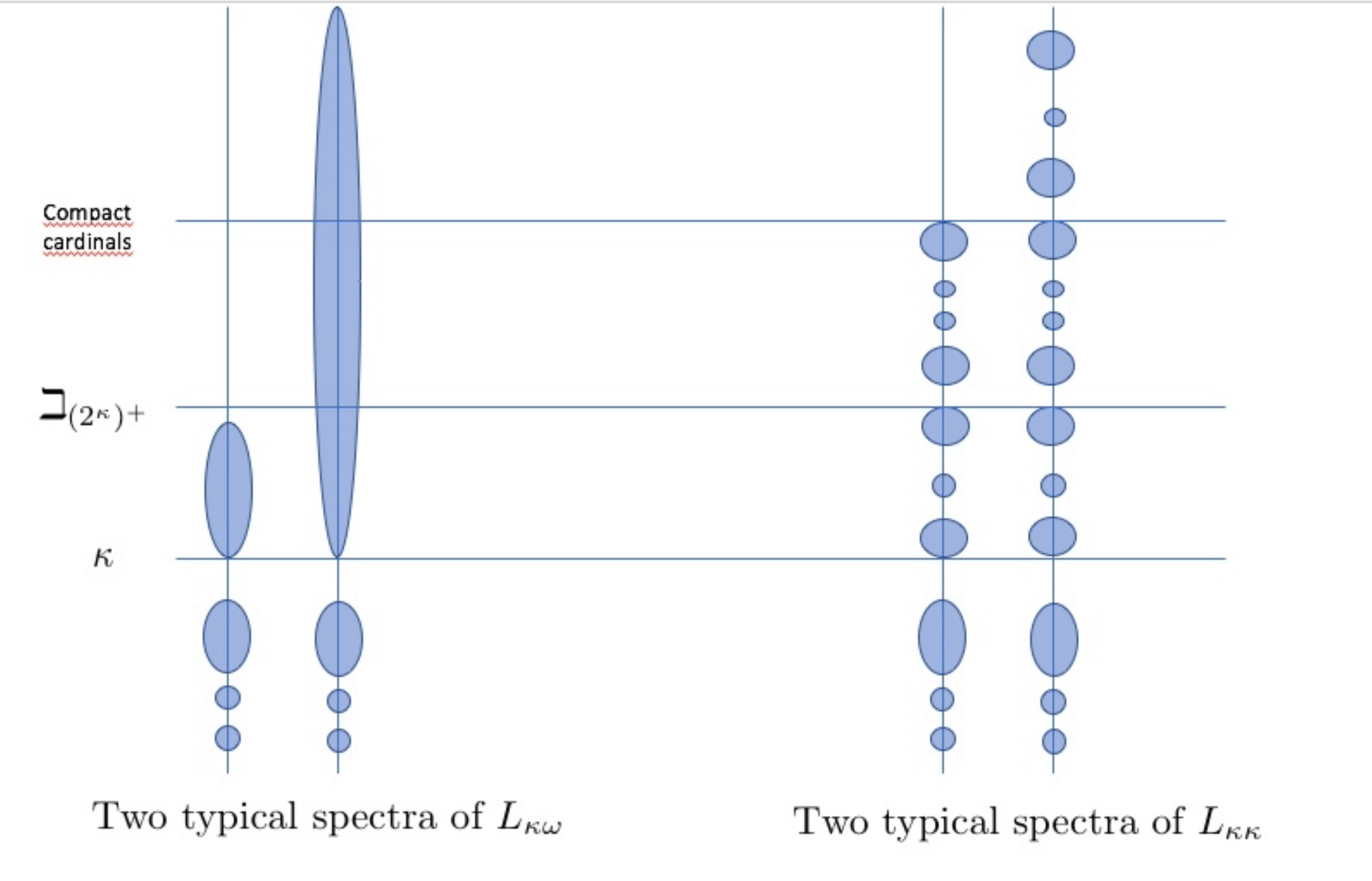}
\caption{\label{pic1}A difference between $L_{\infty\omega}$ and $L_{\infty\infty}$.}

\end{center}
\end{figure}

In Definition~\ref{lnum} above we defined the L\"owenheim the Hanf number of an arbitrary logic. These numebrs always exist if the class of formulas of the logic is a set. Thus they exist for $L_{\kappa\lambda}$ for any $\kappa$ and $\lambda$ but not for $L_{\infty\omega}$ and $L_{\infty\infty}$.

The Hanf-number  and L\"owenheim number of  $L^*$ can be easily defined in terms of spectra. Below a spectrum is called \emph{bounded} if it is a set (rather than a proper class).

\begin{equation}\label{hanf}
\begin{array}{lcl}
\ell(L^*)&=&\sup\{\min(\sp(\phi)):\phi\in L^*, \sp(\phi)\ne\emptyset \}\\
h(L^*)&=&\sup\{\sup(\sp(\phi)):\phi\in L^*
\mbox{ and $\sp(\phi)\ne\emptyset$ is bounded}\}.
\end{array}
\end{equation}

We saw that Sagi employs  L\"owenheim numbers as a measure of logicality: the smaller the L\"owenheim number the greater the degree of logicality. Thus model classes definable in $L(Q_\alpha)$ have a stronger degree of logicality than those definable in $L(Q_\beta)$, for $\beta>\alpha$. 

The logic $L(Q_\alpha)$ satisfies for trivial reasons the L\"owenheim-Skolem property $\LS([\omega,\aleph_\alpha),\{\mu\})$ for all $\mu<\aleph_\alpha$ and the rather strong L\"owenheim-Skolem property $\LS([\aleph_\alpha,\infty)),\{\mu\})$ for all $\mu\ge\aleph_\alpha$. Thus the spectra have no holes and $\ell(L(Q_\alpha))=\aleph_\alpha$. It should be noted that the class size logic $L(Q_\alpha)_{\alpha\in\On}$ is a sublogic of $L_{\infty\infty}$. Its spectra have a greater degree of regularity than those of $L_{\infty\infty}$, and thus in that respect it resembles more $L_{\infty\omega}$. In sum, these two logics, namely $L_{\infty\omega}$ and $L(Q_\alpha)_{\alpha\in\On}$, both manifest a greater degree of logicality than  $L_{\infty\infty}$, {\em if regularity in spectrum patterns is used as a criterion}.

\section{Is every model class definable in $L_{\infty\infty}$ irrespective of the cardinality?}\label{nope}

Why is it that we cannot replace condition (2) of Theorem~\ref{mcgee} with the apparently better condition
\begin{equation}\label{better}
\mbox{$\K$ is definable in $L_{\infty\infty}$?}
\end{equation}  This would solve the problems of cardinality and domain-relativity---the logical concept would have the same definition in terms of a formal language independently of the cardinality of the domain. 
However, there is a very simple reason why condition (2) of Theorem~\ref{mcgee} cannot be improved  to (\ref{better}): Not every model class is definable in $L_{\infty\infty}$. 
%We gave an example of such a class $W_2$ in (\ref{W2}). We now give  some others.

Recall the definition of $\K_{\mbox{\tiny lim}}$ above in Section~\ref{mcg}. It is, strictly speaking,  a generalized quantifier in the sense of \cite{MR89816} but a rather curious one. It is the existential quantifier in models of successor cardinality and the quantifier ``for no $x$" in models of limit cardinality.

\begin{proposition}\label{lim}
The model class $\K_{\mbox{\tiny lim}}$ is not definable in $L_{\infty\infty}$.
\end{proposition}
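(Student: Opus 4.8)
The plan is to derive a contradiction from the assumption that some $\phi\in L_{\infty\infty}$ defines $\K_{\mbox{\tiny lim}}$, by exploiting the L\"owenheim-Skolem property of $L_{\infty\infty}$ recorded in Theorem~\ref{ls}(2). Fix such a $\phi$; then $\phi\in L_{\kappa^+\kappa^+}$ for some infinite cardinal $\kappa$. The idea is to find two cardinals $\lambda$ and $\mu$, one a successor and one a limit, that cannot be distinguished by any sentence of $L_{\kappa^+\kappa^+}$ via the L\"owenheim-Skolem transfer, and on which $\K_{\mbox{\tiny lim}}$ makes contradictory demands.

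Here is the concrete choice. Pick a successor cardinal $\lambda$ large enough that $\lambda>\kappa$ and $\lambda^\kappa=\lambda$ (for instance $\lambda=(2^{\kappa})^{+}$ works, or more simply $\lambda = \nu^{+}$ where $\nu \geq 2^{\kappa}$, so that $\lambda^{\kappa} = \nu^{+} = \lambda$). Since $\lambda$ is a successor cardinal, the model $(\lambda,P)$ with $P\ne\emptyset$, say $P=\{0\}$, lies in $\K_{\mbox{\tiny lim}}$, so $\mm=(\lambda,P)\models\phi$. Now I want a \emph{limit} cardinal $\mu$ with $\kappa\le\mu<\lambda$ and $\mu^\kappa=\mu$, and the crucial point is to guarantee such a $\mu$ exists. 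If $\kappa$ is, e.g., $\aleph_0$, one cannot always find a limit cardinal below $(2^{\aleph_0})^{+}$ closed under countable sequences — so the choice of $\lambda$ must be made more carefully: take $\lambda$ to be the successor of a strong limit cardinal $\nu$ of cofinality $>\kappa$ (these exist cofinally, e.g. $\nu=\beth_{\delta}$ for $\delta$ of cofinality $\kappa^{+}$), so that $\nu$ itself is a limit cardinal with $\nu^\kappa=\nu$ (using $\cf(\nu)>\kappa$ and $\nu$ strong limit), and $\lambda=\nu^{+}$ is a successor with $\lambda^\kappa=\nu^{+}\cdot\nu^\kappa = \lambda$. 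Set $\mu=\nu$.

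With these in hand: $\mm=(\lambda,\{0\})\in\K_{\mbox{\tiny lim}}$ because $\lambda$ is a successor and $P\ne\emptyset$, so $\mm\models\phi$. By Theorem~\ref{ls}(2), since $\phi\in L_{\kappa^+\kappa^+}$, $\lambda\ge\mu$, $\kappa\le\mu\le\lambda$ and $\mu^\kappa=\mu$, the logic $L_{\kappa^+\kappa^+}$ satisfies $\LS([\lambda,\infty),\{\mu\})$, hence $\phi$ has a model of cardinality $\mu$; call it $\mn=(N,Q)$ with $|N|=\mu$. Then $\mn\models\phi$, so $\mn\in\K_{\mbox{\tiny lim}}$. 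But $\mu=\nu$ is a limit cardinal, and membership of $(N,Q)$ in $\K_{\mbox{\tiny lim}}$ with $|N|$ a limit cardinal forces $Q=\emptyset$. Conversely, consider instead the model $\mm'=(\lambda,\emptyset)$: since $\lambda$ is a successor cardinal, $(\lambda,\emptyset)\notin\K_{\mbox{\tiny lim}}$, so $\mm'\models\neg\phi$; the L\"owenheim-Skolem argument applied to $\neg\phi$ (also in $L_{\kappa^+\kappa^+}$) produces a model $(N',\emptyset)$ of $\neg\phi$ with $|N'|=\mu$, a limit cardinal — but $(N',\emptyset)\in\K_{\mbox{\tiny lim}}$ since $\mu$ is a limit and the predicate is empty, so $(N',\emptyset)\models\phi$, contradiction. (One of the two halves suffices; running both makes the symmetry transparent.)

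The main obstacle, as flagged above, is arranging simultaneously that $\lambda$ is a successor, $\mu<\lambda$ is a \emph{limit}, and \emph{both} satisfy the closure condition $\cdot^\kappa=\cdot$ needed to invoke Theorem~\ref{ls}(2) — a naive choice like $\lambda=(2^\kappa)^+$ leaves no room for a limit $\mu$ with $\mu^\kappa=\mu$ strictly below it. Routing through a strong limit cardinal $\nu$ of cofinality $>\kappa$ and taking $\lambda=\nu^{+}$, $\mu=\nu$ resolves this cleanly and in $\ZFC$ alone. Everything else is a direct application of Theorem~\ref{ls}(2) together with the definition of $\K_{\mbox{\tiny lim}}$.
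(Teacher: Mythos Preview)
Your approach is essentially the paper's (which defers to the proof of Proposition~\ref{har}): use Theorem~\ref{ls}(2) to transfer between a successor cardinal and a limit cardinal and contradict the definition of $\K_{\mbox{\tiny lim}}$. The paper takes $\lambda=2^\kappa$ and case-splits on whether $\lambda$ is limit or successor, transferring from $\lambda^+$ down to $\lambda$; you instead manufacture a specific adjacent pair $\mu=\nu$, $\lambda=\nu^{+}$ with $\nu$ a strong limit of cofinality $>\kappa$, which avoids the case split at the price of a little cardinal arithmetic. Either organisation works.

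One slip to repair. Theorem~\ref{ls}(2), stated as the property $\LS([\lambda,\infty),\{\mu\})$, only guarantees \emph{some} model of the given sentence in cardinality $\mu$; it says nothing about the interpretation of $P$ in that model. So in the second half you are not entitled to write the transferred model as $(N',\emptyset)$, and in the first half the conclusion ``$Q=\emptyset$'' is not yet a contradiction --- hence your parenthetical ``one of the two halves suffices'' is false as written. The fix is immediate: apply $\LS$ not to $\phi$ (resp.\ $\neg\phi$) alone but to $\phi\wedge\exists x\,P(x)$ (resp.\ $\neg\phi\wedge\forall x\,\neg P(x)$), which still lies in $L_{\kappa^+\kappa^+}$; then the transferred model has $P\ne\emptyset$ (resp.\ $P=\emptyset$) and either half yields the contradiction directly. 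Also, Theorem~\ref{ls} is stated for regular $\kappa$; you should say so (harmless, since one may always enlarge $\kappa$).
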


\begin{proof}
Similar to the proof of Proposition~\ref{har} below.
\end{proof}

A different kind of generalized quantifier  is the class $$W_2=\{(A,<): (A,<)\cong (\alpha+\alpha,\in)\mbox{ for an ordinal $\alpha$}\}.$$ This is potential isomorphism closed and not definable even in $L_{\infty\infty}$ \cite{MR290943}. Note that this model class is clearly second order definable.
The model class $W_2$ has a particularly nice spectrum, its infinite part being simply the class of all infinite cardinals. This model class is also set-theoretically absolute (see below section~\ref{abs}). In the light of the spectrum criterion, a model being an element of $W_2$ is a logical property of the model of a very high degree of logicality. Looking at a model, can we say that its being a well-order, let alone of a well-order of type $\alpha+\alpha$ for some $\alpha$, is a logical property? Undoubtedly some would say it is a mathematical rather than a logical property. But it is logical in the Tarski-Sher sense and it has a second order definition which is the same definition in each cardinality. However much it feels like a genuinely mathematical concept, it fulfils the Tarski-Sher criterion of logicality, escapes two of Feferman's criticisms, and fulfils Sagi's as well as Bonnay's criteria. So we may conclude that it is logical of a very high degree.

A third example of a model class  undefinable in $L_{\infty\infty}$ is particularly interesting, namely the H\"artig quantifier.
 
\subsection{The H\"artig Quantifier}\label{hartig}

The H\"artig quantifier is defined as follows:
$$\begin{array}{lcl}
Ixy\phi(x)\psi(y)&\iff&\mbox{ there are as many $x$ satisfying $\phi(x)$}\\
&&\mbox{as there are $y$ satisfying $\psi(y)$.}
\end{array}$$

\begin{proposition}[\cite{MR1136448}]\label{har}
The class of models $(M,A,B)$, where $A,B\subseteq M$ and $|A|=|B|$, is not definable in $L_{\infty\infty}$. Equivalently, the H\"artig quantifier 
is not definable in $L_{\infty\infty}$.
\end{proposition}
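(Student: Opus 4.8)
The plan is to show that a sentence $\varphi \in L_{\infty\infty}$, which necessarily lies in some $L_{\kappa\kappa}$ with $\kappa$ regular, cannot distinguish two models $(M,A,B)$ and $(M',A',B')$ in which the ``size comparison'' of $A,B$ differs, provided the cardinalities involved are chosen high enough relative to $\kappa$. Concretely, I would fix $\varphi \in L_{\kappa\kappa}$ and then produce, for a suitable large cardinal $\lambda$, two structures: one of the form $(M,A,B)$ with $|A|=|B|=\lambda$, and another $(M',A',B')$ with $|A'|=\lambda$ but $|B'|=\lambda^{+}$ (or some other cardinal with $|A'|\ne|B'|$), arranged so that the Ehrenfeucht--Fra\"iss\'e game for $L_{\kappa\kappa}$ of the appropriate length cannot tell them apart. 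Since one of these is in the H\"artig class and the other is not, no such $\varphi$ can define it.

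The key technical tool is the characterization of $\equiv_{L_{\kappa\kappa}}$ by a back-and-forth system of partial isomorphisms closed under unions of chains of length $<\kappa$ and with the extension property for subsets of the domains of size $<\kappa$ (the Karp-style characterization of $L_{\infty\kappa}$-equivalence, refined by the quantifier-rank / $\kappa$-size bookkeeping needed for $L_{\kappa\kappa}$). The steps, in order, would be: (1) reduce to $\varphi \in L_{\kappa\kappa}$, $\kappa$ regular; (2) pick $\lambda$ large enough that $\lambda^{<\kappa}=\lambda$ and $\lambda$ is regular (or directly invoke the relevant instance of Theorem~\ref{ls}(2) about $L_{\kappa^+\kappa^+}$ satisfying $\LS([\lambda,\infty),\{\mu\})$ for $\mu^\kappa=\mu$, to pull the would-be model down to a controlled cardinality); (3) build the two structures on a common underlying set where $A$ is a fixed set of size $\lambda$ and $B$, $B'$ are disjoint from $A$ and from each other with $|B|=|A|$ but $|B'|>|A|$, padding the universe with ``junk'' so the ambient cardinality is the same; (4) exhibit an explicit back-and-forth family: a partial isomorphism is a bijection between $<\kappa$-sized subsets respecting $A$, $B$ vs. $A$, $B'$, and such partial maps always extend because both $A$ and each of $B,B'$ have size $\ge\lambda > \kappa$, so there is always room to answer a challenge of size $<\kappa$ on the correct side; (5) conclude $(M,A,B)\equiv_{L_{\kappa\kappa}}(M',A',B')$, contradicting definability. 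The final ``equivalently'' clause about the H\"artig quantifier follows from the operation-vs.-model-class correspondence set up in Section~\ref{ovsmc}.

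The main obstacle I anticipate is the bookkeeping that upgrades $L_{\infty\kappa}$-equivalence (which is the clean Karp back-and-forth story, closed under $<\kappa$-unions and $<\kappa$-extensions) to genuine $L_{\kappa\kappa}$-equivalence, since $L_{\kappa\kappa}$ allows conjunctions of size $<\kappa$ but not arbitrary size, so one must either (a) argue that for the specific $\varphi$ at hand, which lives in $L_{\kappa\kappa}\subseteq L_{\infty\kappa}$, the $L_{\infty\kappa}$ back-and-forth already suffices, or (b) track an appropriate ordinal rank simultaneously with the $\kappa$-size constraint. Option (a) is the cleaner route: $L_{\kappa\kappa}$ is a sublogic of $L_{\infty\kappa}$, and the family of partial maps above is literally a Karp back-and-forth system for $L_{\infty\kappa}$, hence preserves all of $L_{\infty\kappa}$ and a fortiori all of $L_{\kappa\kappa}$. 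The only place real care is needed is ensuring the two structures are genuinely of the same similarity type and same cardinality while differing on $|A|$ vs $|B|$; once the padding is set up symmetrically this is routine. I expect the author's proof to proceed essentially this way, very likely invoking Theorem~\ref{ls}(2) (as the cross-reference to ``the proof of Theorem~\ref{har}'' in Proposition~\ref{lim} suggests a shared method) to first bound the cardinality of a hypothetical model and then run the back-and-forth argument.
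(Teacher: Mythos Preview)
Your back-and-forth argument is correct: with $\kappa$ regular and $\lambda\ge\kappa$, the two structures $(M,A,B)$ and $(M',A',B')$ of ambient size $\lambda^+$ with $|A|=|B|=|A'|=\lambda$, $|B'|=\lambda^+$, and complements of size $\lambda^+$ are $L_{\infty\kappa}$-equivalent via the family of all $<\kappa$-sized partial bijections respecting the three-piece partition; since $L_{\kappa\kappa}\subseteq L_{\infty\kappa}$, this kills any putative defining sentence.

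However, the paper's proof is genuinely different and does \emph{not} run a back-and-forth at all. It is a pure spectrum argument. Assuming $I$ were definable in some $L_{\kappa^+\kappa^+}$, the paper writes down explicit $L_{\omega\omega}(I)$-sentences $\phi_{\mathrm{suc}}$ and $\phi_{\lim}$ whose spectra are exactly the successor (resp.\ limit) cardinals; these would then be $L_{\kappa^+\kappa^+}$-sentences. Taking $\lambda=2^\kappa$ (so $\lambda^\kappa=\lambda$), Theorem~\ref{ls}(2) pushes a model of $\phi_{\mathrm{suc}}$ of size $\lambda^+$ down to size $\lambda$ if $\lambda$ is a limit cardinal, and symmetrically with $\phi_{\lim}$ if $\lambda$ is a successor---either way a contradiction with the spectrum. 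So Theorem~\ref{ls}(2) is not a preliminary cardinality-bounding step as you guessed; it \emph{is} the engine of the proof, and this is exactly why the same method transfers verbatim to Proposition~\ref{lim}.

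What each approach buys: your argument is self-contained and makes the indistinguishability visible at the level of two concrete models, without needing the L\"owenheim--Skolem machinery as a black box. The paper's argument is less elementary but dovetails with the paper's running theme that spectra diagnose definability, yields Proposition~\ref{lim} for free, and extends readily to other quantifiers expressible in $L(I)$ with irregular spectra.
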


\begin{proof}
Two sentences of the logic $L_{\omega\omega}(I)$  are useful here: the sentence $\phi_{\lim}$ which has the class of limit cardinals as its spectrum, and the sentence  $\phi_{suc}$  which has the class of infinite successor cardinals as its spectrum. The sentence $\phi_{\lim}$ says that the universe is totally ordered by a linear order in which every element determines an initial segment which has smaller cardinality than the initial segment determined by some bigger element. The sentence $\phi_{suc}$ says that the universe is totally ordered by a linear order in which some element determines an initial segment which has the same cardinality as the initial segment determined by any bigger element, but smaller cardinality than the entire universe.  Suppose the H\"artig quantifier was definable in $L_{\kappa^+\kappa^+}$, where $\kappa$ is regular. Let $\lambda=2^\kappa$. 
If $\lambda$ is a limit cardinal, we can use $\phi_{\lim}$ to derive a contradiction as follows: Let $\mm$ be a model of $\phi_{\suc}$ of cardinality $\lambda^+$. By Theorem~\ref{ls} there is a model $\mn$ of cardinality $\lambda$ such that $\mn\models\phi_{\suc}$, a contradiction. The case that $\lambda$ is a successor cardinal is similar, but using $\phi_{\lim}$ instead of $\phi_{\suc}$.
\end{proof}

While the logic $L(I)$ with the H\"artig quantifier is not a sublogic of $L_{\infty\infty}$ it has some of the properties of the latter that we associate with being very far from first order logic (see Section~\ref{compl}). For example, it is highly non-absolute. It is also unbounded i.e. capable of defining well-ordering with additional predicates \cite{MR0244012}.

Note that the equicardinality concept, built around the existence of a bijection, lies at the very heart of the definition of logicality in the sense of Tarski! It is almost paradoxical then, that  equicardinality itself, or more precisely the logic built on the equicardinality quantifier, represents a particularly weak degree of logicality.

\subsection{Vop\v{e}nka's Principle}

While it is not true that every model class is definable in $L_{\infty\infty}$, there is a weaker result which depends on large cardinals. Tarski proved that a model class $\K$ is definable by a universal first order sentence if and only if $\K$ is closed under substructures and any model all of whose  finite substructures  are in $\K$, is itself in $\K$. The following result removes the assumption on finite substructures but lifts the result to an infinitary level. Recall that
\emph{Vop\v{e}nka's principle} is the axiom schema stating that if $\{A_\alpha : \alpha \in \On\}$ is a (definable) proper class of structures of the same vocabulary, there are $\alpha\ne \beta$ such that $A_\alpha$ can be  embedded into $A_\beta$. Despite its formulation, which suggests no connection to large cardinals, this principle is actually a large cardinal axiom schema. It implies the existence of extendible (and hence supercompact) cardinals \cite{MR295904}, and its consistency follows from the existence of an almost huge cardinal (see e.g. \cite[p. 338]{MR2731169}). Magidor has proved the following characterisation of Vop\v{e}nka's Principle:\footnote{We present the proof with Professor Magidor's kind permission.}

\begin{theorem}[\cite{magidorCIRM}]
The following are equivalent:
\begin{enumerate}
\item Every model class which is closed under substructures is $L_{\infty\infty}$-definable by a universal sentence.
\item Vop\v{e}nka's Principle.
\end{enumerate}
\end{theorem}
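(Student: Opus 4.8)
The plan is to prove the two implications separately, with the direction $(2)\Rightarrow(1)$ being the substantive one.

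For $(1)\Rightarrow(2)$: I would argue contrapositively. Suppose Vop\v{e}nka's Principle fails, so there is a definable proper class $\{A_\alpha : \alpha\in\On\}$ of structures of a fixed vocabulary with no $A_\alpha$ embeddable in $A_\beta$ for $\alpha\ne\beta$. By standard coding (pairing each structure with an ordinal, or passing to a suitable rigidified vocabulary) I may assume the $A_\alpha$ have strictly increasing cardinalities and that the only embeddings among them are the identity embeddings $A_\alpha\hookrightarrow A_\alpha$. Now let $\K$ be the closure under substructures of $\{A_\alpha:\alpha\in\On\}$, i.e.\ $\K=\{\mm : \mm$ embeds into some $A_\alpha\}$. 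This is closed under substructures by construction. The claim is that $\K$ is not $L_{\infty\infty}$-definable by any universal sentence: any $\phi\in L_{\infty\infty}$ lies in $L_{\kappa^+\kappa^+}$ for some $\kappa$, and its models form a class with controlled behaviour under the L\"owenheim--Skolem theorem (Theorem~\ref{ls}), so a single such $\phi$ cannot simultaneously force membership for each $A_\alpha$ while a universal sentence true in each $A_\alpha$ would have to be true in ``amalgams'' or elementary-type extensions that destroy the anti-chain property. The cleanest route is: if $\K=\Mod(\forall\bar x\,\psi)$ with $\forall\bar x\,\psi\in L_{\kappa^+\kappa^+}$, then using the Scott-type sentences $\eta'_\alpha$ of (\ref{etap}) one builds, for large $\alpha$, a structure in $\K$ of a cardinality $A_\alpha$ does not have, or conversely shows each $A_\alpha$ satisfies $\forall\bar x\,\psi$ and hence so does a structure into which two distinct $A_\alpha$ embed --- contradicting the anti-chain property. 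I expect this to require care about exactly how the counterexample class is coded.

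For $(2)\Rightarrow(1)$: Assume Vop\v{e}nka's Principle and let $\K$ be a model class closed under substructures. The strategy mirrors Tarski's finite-substructure argument but at the infinitary level. Let $\K^c$ be the complement. For each $\mm\notin\K$, since $\K$ is closed under substructures, no substructure issue arises ``downward''; I want to produce for each such $\mm$ a universal-type sentence $\sigma_\mm\in L_{\infty\infty}$ true in $\mm$ and false throughout $\K$, and then take $\phi=\bigwedge_{\mm\notin\K}\neg\sigma_\mm$ --- but this conjunction is a proper class of conjuncts, so the real work is to bound it. This is where Vop\v{e}nka's Principle enters: consider the class of structures witnessing non-membership (each $\mm\notin\K$ expanded by constants naming all its elements, so that embeddings become ``substructure'' inclusions). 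If this were a proper class, VP gives two of them, say coming from $\mm_0,\mm_1\notin\K$, with $\mm_0$ embeddable into $\mm_1$ --- but an embedding of a structure-with-all-constants-named is an isomorphism onto a substructure, so $\mm_0$ is (isomorphic to) a substructure of $\mm_1$, and closure of $\K^c$ is not automatic, so instead one applies VP to the structures in $\K^c$ themselves to extract a set-sized subfamily that is cofinal for embeddability. Concretely: let $\mathcal{D}\subseteq\K^c$ be such that every member of $\K^c$ has some member of $\mathcal{D}$ embeddable into it --- VP guarantees $\mathcal{D}$ can be taken to be a set, since otherwise an $\On$-indexed embeddability-antichain in $\K^c$ would contradict VP. For $\mn\in\mathcal{D}$ write $\sigma_\mn$ for the canonical universal sentence in $L_{\infty\infty}$ describing (the diagram restricted to atomic and negated-atomic formulas of) $\mn$ up to the relevant cardinality; then $\mm\models\sigma_\mn$ iff $\mn$ embeds into $\mm$. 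Set $\phi=\bigwedge_{\mn\in\mathcal{D}}\neg\sigma_\mn$, a legitimate $L_{\infty\infty}$-sentence since $\mathcal{D}$ is a set, and universal since each $\neg\sigma_\mn$ is (equivalent to) a universal $L_{\infty\infty}$-sentence. Then $\mm\models\phi$ iff no $\mn\in\mathcal{D}$ embeds into $\mm$ iff $\mm\notin\K^c$ (using that $\mathcal{D}$ is embeddability-cofinal in $\K^c$ and $\K$ is substructure-closed) iff $\mm\in\K$.

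The main obstacle, and the step I would spend the most effort on, is the invocation of Vop\v{e}nka's Principle to replace the proper-class family of ``forbidden'' substructures by a set: one must check that the relevant family of structures (the diagrams of elements of $\K^c$, suitably rigidified so that the algebraic embeddings relevant to substructure-closure coincide with the structural embeddings quantified in VP) is genuinely \emph{definable} as a class, so that VP applies, and that a set cofinal under embeddability actually exists rather than merely two comparable elements. Passing from ``there exist $\alpha\ne\beta$ with an embedding'' to ``there is a set-sized cofinal subfamily'' is the standard but delicate application of VP (one iterates/uses a rank argument), and getting the universal-sentence bookkeeping right --- so that $\neg\sigma_\mn$ is provably equivalent to a \emph{universal} sentence of $L_{\infty\infty}$ rather than merely some sentence --- is the second point needing attention.
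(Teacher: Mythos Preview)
Your argument for $(2)\Rightarrow(1)$ is sound in outline and reaches the same endpoint as the paper: a set $\mathcal{D}\subseteq\K^c$ coinitial under embeddability, whence $\bigwedge_{\mn\in\mathcal{D}}\neg\sigma_\mn$ defines $\K$. The paper obtains $\mathcal{D}$ differently, invoking the consequence of Vop\v{e}nka's Principle (via \cite{MR482431}) that there is a $\K$-supercompact cardinal $\kappa$; then every $\mn\notin\K$ has an $L_{\omega\omega}(Q_\K)$-elementary substructure of size $\le\kappa$, which is automatically outside $\K$, so one may simply take $\mathcal{D}=\{\mm:M\subseteq\kappa,\ \mm\notin\K\}$. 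Your direct recursive extraction avoids the large-cardinal detour but needs the standard rigidification (adjoin a copy of the ordinal index so that any embedding is forward in the indexing) to turn ``no $\mm_\beta$ with $\beta<\alpha$ embeds into $\mm_\alpha$'' into a genuine contradiction with VP; you should make that step explicit, since VP as stated only promises \emph{some} embedding between two members, not one respecting the enumeration order.

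Your argument for $(1)\Rightarrow(2)$, however, has a real gap. Taking $\K$ to be the substructure-closure of $\{A_\alpha\}$ puts every $A_\alpha$ \emph{inside} $\K$, and then the L\"owenheim--Skolem move yields only small substructures of the $A_\alpha$---which are again in $\K$ by closure---so no contradiction emerges. Your talk of ``amalgams or elementary-type extensions'' goes the wrong way: universal sentences are preserved downward, not upward, so a structure into which two distinct $A_\alpha$ embed need not satisfy the putative defining sentence. The paper's choice of $\K$ is the missing idea: let $\K$ be the class of structures that embed into $\mm_\alpha$ for \emph{arbitrarily large} $\alpha$. This is still closed under substructures, but now each $\mm_\alpha\notin\K$ (otherwise $\mm_\alpha$ would embed into some $\mm_\beta$ with $\beta\ne\alpha$, contradicting failure of VP). If $\K$ were definable in $L_{\kappa^+\kappa^+}$, take $L_{\kappa^+\kappa^+}$-elementary substructures $\mn_\alpha\preceq\mm_\alpha$ of size $\le 2^\kappa$; each $\mn_\alpha\notin\K$ by elementarity. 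There are only set-many isomorphism types of that size, so a single $\mn$ recurs for a proper class of $\alpha$, hence embeds into a proper class of $\mm_\alpha$, hence lies in $\K$---contradiction. You should replace your $\K$ with this one.
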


\begin{proof} (2) implies (1): Suppose $\K$ is a model class which is closed under substructures. By Vop\v{e}nka's Principle there is a $\K$-supercompact cardinal $\kappa$ (see \cite{MR482431}). 
For any $\mm$ of cardinality $\le\kappa$, let $\theta_\mm\in L_{\kappa^+\kappa^+}$ be an existential sentence such that $\theta_\mm$ is true exactly in models which have an isomorphic copy of $\mm$ as a substructure. We show that for all $\mn$, $\mn\notin \K$ if and only if $\mn\models\psi$, where $\psi$ is $\bigvee\{\theta_\mm: M\subseteq\kappa, \mm\notin \K\}$. Suppose first $\mn\notin \K$. By the $\K$-supercompactness of $\kappa$ there is an $L_{\omega\omega}(Q_{\K})$-elementary substructure $\mm$ of $\mn$ of cardinality $\le\kappa$. Now $\mn\notin \K$ implies $\mm\notin \K$. Trivially, $\mn\models\theta_\mm$. Hence $\mn\models\psi$. Conversely, suppose $\mn\models\theta_\mm$ for some $\mm\notin \K$ such that $M \subseteq\kappa$. W.l.o.g. $\mm\subseteq\mn$. By  closure under substructures, $\mn\notin \K$.

(1) implies (2): Suppose Vop\v{e}nka's Principles fails as witnessed by the class $\C=\{\mm_\alpha : \alpha\in\On\}$. Let $\K$ be the class of structures $\mn$ of the same vocabulary as each $\mm_\alpha$ such that $\mn$ is isomorphic to an  substructure of  $\mm_\alpha$ for arbitrarily large $\alpha$. By (1) we may choose $\kappa$ such that $\K$ is definable in $L_{\kappa^+\kappa^+}$. By our choice of  $\C$, every $\mm_\alpha$ satisfies $\mm_\alpha\notin\K$. By 
Theorem~\ref{ls} there is for each $\alpha$ an $L_{\kappa^+\kappa^+}$-elementary substructure $\mn_\alpha$ of $\mm_\alpha$ of cardinality $\le 2^\kappa$ such that $\mn_\alpha\notin\K$. There are only a set of non-isomorphic structures of cardinality  $\le 2^\kappa$. Hence $\mn_\alpha\cong\mn_\beta$ for a proper class of $\alpha$ and $\beta$. Let $\alpha$ be one of those. Then $\mn_\alpha\in\K$, a contradiction.
\end{proof}

The moral of the story here is that we can obtain the McGee style characterization of logicality independently of the cardinality of the model if we restrict to model classes closed under substructures, but we have to make a strong set theoretical assumption, namely Vop\v{e}nka's Principle. This is not to suggest that we propose closure under substructures as a criterion for logicality.

\section{Defining an arbitrary model class in an absolute logic, but still cardinal dependently}\label{abs}

We shall now offer a refinement of McGee's result (Theorem~\ref{mcgee}), by replacing $L_{\infty\infty}$ by a substantially weaker infinitary logic\footnote{McGee alludes to such a possibility on p. 574 of \cite{MR1426482}.}. Let us first note that we cannot replace $L_{\infty\infty}$ by $L_{\infty\omega}$ because the quantifier $Q_1$ is not CD-definable in the latter.\footnote{Suppose $\mm$ is a model of cardinality $\aleph_1$ with one unary predicate $P$ and $P$ is countable. Suppose $\mn$ is another   model of cardinality $\aleph_1$ for the same vocabulary but now both $P$ and its complement are uncountable. Clearly $\mm$ and $\mn$ are partially (potentially) isomorphic and hence satisfy the same $L_{\infty\omega}$-sentences. But $Q_1xP(x)$ is true in one but not in the other.} For another example, the well-ordering quantifier
$$\mm\models Wxy\phi(x,y,\vec{a})\iff\{(c,d):\mm\models\phi(c,d,\vec{a})\}\mbox{ well-orders $M$}$$ is definable in $L_{\omega_1\omega_1}$ (see (\ref{wf})) but not  in $L_{\infty\omega}$ \cite{MR200131}.

An important quality of $L_{\infty\omega}$ is its absoluteness. Let us recall:

\begin{definition}[\cite{MR0337483}]\label{abso}
A logic $(\Sigma,T)$ is called an \emph{absolute logic} if the predicate $\Sigma(\phi)$ is $\Sigma_1$ in $\phi$, and the predicate $T(\phi,\mm)$ is $\Delta_1$ in $\phi$ and $\mm$. A logic is absolute with respect to a set theory $S$ if the predicates $\Sigma$ and $T$ are $\Sigma_1^S$ and $\Delta_1^S$, respectively.
\end{definition}
%We define the $\Delta$ extension of a logic below. 

For example, $L_{\infty\omega}$ and $L(W)$ are absolute logics but $L_{\infty\infty}$ is not.

We defined the $\Delta$-operation above in Definition~\ref{sigma11}.
As argued for in \cite{MR457146}, $\Delta(L^*)$ can be considered a logic in itself. There is a many-sorted version and a single-sorted version of the $\Delta$-extension. In the many-sorted version the model class $\K$ may have a many-sorted vocabulary. If $L^*$ is the logic $L_{\infty\omega}$, or $L_{\kappa^+\omega}$, for regular $\kappa$, there is no difference between the two versions i.e. they coincide  \cite{MR606603}. Therefore we continue with the single-sorted version.

Some of the nice properties of the $\Delta$-operation are:
\begin{itemize}
\item Exactly the same classes of cardinals are spectra of $\Delta(L^*)$ as are of $L^*$.
\item $\Delta(L^*)$ satisfies $LS(C,D)$ iff $L^*$ does.
\item $\Delta(L^*)$ satisfies the Compactness Theorem iff $L^*$ does.
\item $\Delta(L^*)$ satisfies the (abstract) Completeness Theorem\footnote{I.e. the set of G\"odel numbers of valid sentences is r.e. This is only meaningful for $L^*$ where formulas are finite objects so that G\"odel numbering makes sense.} iff $L^*$ does.
\item If $L^*$ satisfies the Craig Interpolation Theorem, then $\Delta(L^*)$ and $L^*$ are equivalent logics.
\item $\ell(\Delta(L^*))=\ell(L^*)$.
\item $h(\Delta(L^*))=h(L^*)\footnote{This may fail in the many-sorted version \cite{MR690669}.}.$
\end{itemize}

If $L^*$ is an absolute logic in the sense of \cite{MR0337483}, then $\Delta(L^*)$ is still absolute in the weaker sense that every definable model class is $\Delta_1$-definable in set theory (with the defining sentence as a parameter). This is a direct consequence of the definition of $\Delta(L^*)$. However, $\Delta(L^*)$ has a complication which prevents us from concluding that the $\Delta$-operation preserves absoluteness. Namely, there may be an absolute logic $L^*$ and model classes $\K_0$ and $\K_1$ such that both $\K_0$ and $\K_1$ are in $\Sigma(L^*)$, $\K_0\cap \K_1= \emptyset$ but the proposition  that every model (of the right type) is in $\K_0$ or $\K_1$ is not absolute\footnote{For example, we may take $\K_0$ to be the class of trees of height and size $\omega_1$ with an uncountable branch, and $\K_1$ to be the class of trees of height and size $\omega_1$ with a strict order preserving mapping into the rational numbers. It is a consequence of Martin's Axiom that every such tree is  in $\K_0\cup \K_1$. However, if $V=L$, then there are Souslin trees which are not in $\K_0\cup \K_1$.}. Thus we may not have a $\Sigma_1$-definition for the set of sentences of $\Delta(L^*)$ even if we have for $L^*$. Despite this shortcoming of the $\Delta$-operation, we do have a $\Sigma_1$-definition for the set of sentences of $\Sigma(L^*)$ when $L^*$ is absolute because with $\Sigma(L^*)$ the above problem ($\K_0$ and $\K_1$) does not arise. 

\begin{theorem}[\cite{Kennedy2021-KENGTA-2}]\label{impr}
If $\K$ is a class of models of the same  vocabulary, then the following conditions are equivalent:
\begin{enumerate}
\item $\K$ is closed under isomorphisms.
\item $\K$ is CD-definable in $\Delta(L_{\infty\omega})$.
\end{enumerate}
\end{theorem}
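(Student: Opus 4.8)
The plan is to prove the equivalence by establishing both directions, with the substantive content lying in (1) $\Rightarrow$ (2); the converse is immediate since every model class definable in any abstract logic (and $\Delta(L_{\infty\omega})$ is an abstract logic, cf.\ the discussion following Definition~\ref{sigma11}) is closed under isomorphisms. For (1) $\Rightarrow$ (2), fix an infinite cardinal $\lambda$ and aim to show $\K_\lambda$ is $\Delta(L_{\infty\omega})$-definable, i.e.\ that both $\K_\lambda$ and its complement (within the class of $L$-structures of cardinality $\lambda$) are projections of $L_{\infty\omega}$-definable classes. The key idea is to replace McGee's use of the Scott-type sentences $\theta_\alpha \in L_{\lambda^+\lambda^+}$ characterizing each $\mm_\alpha$ up to isomorphism (which require conjunctions of length up to $\lambda^+$, hence genuinely land in $L_{\infty\infty}$) by a device that only ever uses \emph{countable} conjunctions together with an auxiliary sort or auxiliary predicates that are then projected away.

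First I would recall that a structure $\mm$ of cardinality $\lambda$ can be coded by a well-ordering of its universe in order type $\lambda$, together with the relations pulled back along that well-ordering; equivalently, $\mm$ is captured by a subset of $\lambda$ (coding the atomic diagram along the well-order). The crucial point is that being isomorphic to a \emph{fixed} structure $\mm_\alpha$ of size $\lambda$ can be expressed in $L_{\infty\omega}$ once one is allowed to quantify (via the projection) over such a coding: one adjoins a binary relation $<$ asserting (in $L_{\infty\omega}$, using the $\eta'_\lambda$-style machinery of~(\ref{etap})) that $<$ well-orders the universe in type $\lambda$, and then one writes down, with a conjunction of length $\lambda$ but each conjunct finitary, the full atomic diagram of $\mm_\alpha$ transported along $<$. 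Such a conjunction of length $\lambda$ is a single formula of $L_{\lambda^+\omega} \subseteq L_{\infty\omega}$, which is the gain over $L_{\infty\infty}$. Then $\K_\lambda$ is the projection (forgetting $<$) of the $L_{\infty\omega}$-class defined by $\bigvee_{\alpha:\,\mm_\alpha\in\K_\lambda}(\text{``}{<}\text{ is a well-order of type }\lambda\text{''}\wedge \text{``atomic diagram is that of }\mm_\alpha\text{''})$, which is a disjunction over a set (of size $\le 2^\lambda$) of $L_{\infty\omega}$-sentences, hence itself in $L_{\infty\omega}$. Symmetrically, the complement $\K_\lambda^{c}$ is the projection of the analogous disjunction over those $\alpha$ with $\mm_\alpha\notin\K_\lambda$. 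Thus both $\K_\lambda$ and $\K_\lambda^c$ are in $\Sigma(L_{\infty\omega})$, so $\K_\lambda\in\Delta(L_{\infty\omega})$, as required.

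The step I expect to be the main obstacle is making the ``atomic diagram transported along $<$'' genuinely a formula of $L_{\infty\omega}$ --- that is, keeping every quantifier block finite while the conjunction ranges over $\lambda$ many atomic facts. One must be careful that identifying the $\beta$-th element of the universe under $<$ does not require an infinitary \emph{quantifier} (which would push us into $L_{\infty\lambda}$); the fix is to use the formulas $\eta_\beta(x)$ from~(\ref{eta}), which define ``$x$ is the $\beta$-th element'' using only the single free variable $x$ and countable quantifier depth, so that a conjunct of the form $\forall x\forall y(\eta_\beta(x)\wedge\eta_\gamma(y)\to R(x,y))$ (or its negation) is finitary in its quantifiers. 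Assembling these over all relevant pairs $(\beta,\gamma)<\lambda$ gives a conjunction of length $\le\lambda$ of formulas each in $L_{\infty\omega}$, hence a sentence of $L_{\lambda^+\omega}$. A secondary point to check is that the single-sorted $\Delta$-extension suffices: since we adjoin only predicates on the existing universe (the relation $<$), not a new sort, and since for $L_{\infty\omega}$ the single-sorted and many-sorted $\Delta$-extensions coincide (\cite{MR606603}, as noted in the excerpt), this is unproblematic. Finally one remarks that the whole construction is carried out for each $\lambda$ separately, so we obtain CD-definability in $\Delta(L_{\infty\omega})$ and no more, matching the cardinal-dependent character of McGee's original theorem.
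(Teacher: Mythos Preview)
Your proposal is correct and follows essentially the same route as the paper's proof: expand by an auxiliary well-order $<$, use the formulas $\eta_\alpha(x)$ and $\eta'_\lambda$ to pin down positions and order-type, encode the atomic diagram of each $\mak\in\K$ of size $\lambda$ by a conjunction $\bigwedge_{\alpha,\beta<\lambda}\forall x\forall y((\eta_\alpha(x)\wedge\eta_\beta(y))\to\rho_{\alpha,\beta}(x,y))$ with only finite quantifier blocks, take the disjunction over all such $\mak$, and project away $<$. The only cosmetic difference is that the paper obtains the $\Pi$-side by the duality $\exists{<}(\eta'_\lambda\wedge\Theta_\K)\iff\forall{<}(\eta'_\lambda\to\Theta_\K)$ rather than by building a separate $\Theta_{\neg\K}$ as you do; both yield the same $\Delta(L_{(2^\lambda)^+\omega})$-definition of $\K_\lambda$.
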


\begin{proof}The proof is similar to the proof of Theorem~\ref{mcgee} in \cite{MR1426482}.
Recall the formulas $\eta'_\alpha$ and $\eta'_\alpha$ defined in (\ref{eta}) and (\ref{etap}), with the properties (\ref{etaA}) and (\ref{etapA}).
Suppose $\mak$ is a model of the vocabulary $L$ and $|A|=\lambda$. Let $f_A:\lambda\to A$ be a bijection and $a<_Ab\iff f_A^{-1}(a)<f_A^{-1}(b)$. For $\alpha,\beta<\lambda$ let
$$\rho_{\alpha,\beta}(x,y)=\left\{\begin{array}{ll}
R(x,y)&\mbox{ if $\mak\models R(f_A(\alpha),f_B(\beta))$}\\
\neg R(x,y)&\mbox{ if $\mak\not\models R(f_A(\alpha),f_B(\beta))$}
\end{array}\right.$$
Let
$$\Phi_\mak\leftrightarrow\forall x\forall y\bigwedge_{\alpha,\beta<\lambda}((\eta_\alpha(x)\wedge\eta_\beta(y))\to\rho_{\alpha,\beta}(x,y)).$$Now $(\mak,<_A)\models\eta'_\lambda$ and $(\mak,<_A)\models\eta_\alpha(a)\iff a=f_A(\alpha)$. Hence
$(\mak,<)\models\eta'_\lambda\wedge\Phi_\mak.$ On the other hand, 
$(\mak',<')\models\eta'_\lambda\wedge\Phi_\mak$ implies $\mak'\cong\mak,$
for if $(\mak',<')\models\eta'_\lambda\wedge\Phi_\mak$ and $g:(A',<')\cong(A,<_A)$, then  
$g:\mak'\cong\mak$. Let $$\Theta_\K\leftrightarrow\bigvee\{\Phi_\mak\ : \ \mak\in\K, A=\lambda\}.$$ Now by the above, 
$$\begin{array}{lcl}
\mak\in\K_\lambda&\iff&(\mak,<)\models\eta'_\lambda\wedge\Theta_\K\mbox{ for some $<$ }\\
&\iff&(\mak,<)\models\eta'_\lambda\to\Theta_\K\mbox{ for all $<$}.
\end{array}$$ Since $\eta'_\lambda,\Theta_\K\in L_{(2^\lambda)^+\omega}$, we are done.
\end{proof}

Why is this  an improvement? $\Delta(L_{\infty\omega})$ is a  sublogic of $L_{\infty\infty}$, a consequence of the result of Malitz \cite{MR290943} to the effect that for regular $\kappa$ a valid implication in $L_{\kappa\omega}$ can be interpolated in $L_{2^{(<\kappa)^+\kappa}}$. It is a proper sublogic because well-ordering is definable in the latter but not in the former. Secondly, the logic $L_{\infty\omega}$ is an absolute logic \cite{MR0337483} and  $\Delta(L_{\infty\omega})$ inherits much of the absoluteness of  $L_{\infty\omega}$ (see above) while $L_{\infty\infty}$ is badly non-absolute. 

The improvement that Theorem~\ref{impr} represents over Theorem~\ref{mcgee} is also based on the fact that  $L_{\infty\omega}$, and thereby also $\Delta(L_{\infty\omega})$, is a much ``tamer" logic than $L_{\infty\infty}$. In particular:

\begin{itemize}
\item $\Delta(L_{\infty\omega})$, even $\Sigma(L_{\infty\omega})$, has a strong L\"owenheim-Skolem theorem. This is in contrast to  $L_{\infty\infty}$, which does not have a L\"owenheim-Skolem Theorem in the same strong form (see Theorem~\ref{ls}).
%\item In $L_{\infty\infty}$ there are (e.g.) sentences which have  models in exactly the cardinalities $\mu$ such that $\mu^\omega=\mu$. Such a $\mu$ cannot have cofinality $\omega$. Thus $L_{\infty\infty}$ does not have a L\"owenheim-Skolem Theorem in the same strong form as $\Delta(L_{\infty\omega})$.
\item The class of well-orderings is not definable in $\Delta(L_{\infty\omega})$, while it is definable in $L_{\omega_1\omega_1}$ (see (\ref{wf}) below). This is a kind of threshold difference. In general, logics in which the concept of well-order is not definable are much better behaved (in many different respects) than those in which it is. If well-order is definable, the logic can talk about well-founded models of set theory, and thereby about transitive models of set theory. In this way the logic can break through the object theory/metatheory barrier and have access to the background set theory. For a concrete example, consider a sentence $\phi$ which is the conjunction of a sufficiently large finite part $T_0$ of the ZFC axioms, the first order set-theoretical statement $\phi_0$ that there are no inaccessible cardinals,  and the $L_{\omega_1\omega_1}$-sentence
\begin{equation}\label{wf}
\displaystyle{\forall x_0\forall x_1\ldots\bigvee_{n<\omega}}\neg x_{n+1}\in x_n.
\end{equation}
Suppose $\kappa$ is the smallest inaccessible cardinal. Then $\phi$ has a model of cardinality $\kappa$, namely $V_\kappa$, but none of bigger cardinality. For suppose $M$ is a model of $\phi$ of cardinality $>\kappa$. By Mostowski's Collapsing Lemma \cite{MR35721} we may assume $M$ is a transitive model of $T_0$. Since the cardinality of $M$ is bigger than $\kappa$, the ordinal $\kappa$ is in $M$ and is therefore inaccessible in $M$ by the downward persistency of inaccessibility. But this contradicts the fact that $M$ is a model of $\phi_0$. This argument, due to Silver \cite{MR409188}, demonstrates the power of (\ref{wf}) to penetrate the object theory/metatheory barrier, which sentences of $\Delta(L_{\infty\omega})$ are unable to do \cite{MR200131}.
 
\item The Hanf number of $\Delta(L_{\kappa^+\omega})$ is only moderately large, namely $<\beth_{(2^\kappa)^+}$, while the Hanf number of $L_{\omega_1\omega_1}$ is bigger than the first weakly inaccessible cardinal\footnote{See argument  in the previous bullet.} (and consistently bigger than the first measurable cardinal). The smallness of the Hanf number is another indication of regularity of patterns in spectra (see (\ref{hanf})).
\end{itemize}

A weakness of $\Delta(L_{\infty\omega})$ as compared to $L_{\infty\infty}$ is that the former does not have as explicit a syntax as the latter, although we can overcome this by replacing $\Delta(L_{\infty\omega})$ by $\Sigma(L_{\infty\omega})$. A strength is that its spectra are as regular as those of $L_{\infty\omega}$, it is equally completely axiomatizable as $L_{\infty\omega}$, and it is absolute in the sense that its definable model classes are absolute in set theory. Thus it avoids Feferman's second and third criticisms.

\section{Sort Logic}\label{sort}

We can ask, is there a logic in which every model class whatsoever is definable, irrespective of the cardinality of the domain? That is, not just cardinal dependently? We know already  $L_{\infty\infty}$ is not that kind of a logic (Section~\ref{nope}). McGee points out that we could take a class size disjunction of $L_{\infty\infty}$ sentences and obtain a single `sentence' which works in all cardinalities. As he points out, if we accept class size formulas we should also accept class size logical operations and then we are back in the starting point: to account for class size logical operations we need classes of classes and we start climbing up the type hierarchy beyond first order set theory. If we stick to set size logical operations we can operate within first order set theory.

We could, of course,  take every model class as a generalized quantifier\footnote{In the sense of section~\ref{mcg} and of \cite{MR0244012}.} but there is a more canonical logic for this task.

In \cite{MR3205075} a logic $L^s$ called \emph{sort logic} was introduced. It is a kind of many-sorted version of second order logic $L^2$, which allows quantification not only over subsets and relations on the domain of the model but also over subsets and relations on new domains {\em outside} the current one. Such quantification happens, for example, when we ask of a given group, whether it is the multiplicative group of a field? We have to ``guess" the addition of the field but also the neutral element $0$ and those are both outside the multiplicative group.

 More exactly, sort logic arises from $L^2$ by repeated applications of operation $L^*\mapsto\Sigma(L^*)$ (see Definition~\ref{sigma11}) and negation.   Let us define $\Delta_0=L^2$. If $\Delta_n$ has been defined, let $\Delta_{n+1}$ consist of model classes $\K$ such that both $\K$ and the complement of $\K$ are $\Sigma(\Delta_n(L^2))$-definable. We get an increasing hierarchy of logics
$$\Delta_1\le\Delta_2\le \Delta_3\le \ldots$$
the union of which is sort logic. A fine point is that every level $\Delta_n$ of sort logic is definable in set theory but there is no single formula that defines the entire sort logic.

\begin{theorem}[\cite{MR3205075}] If $\K$ is a class of models of the same finite vocabulary, definable in set theory without parameters,\footnote{We may allow parameters if we extend sort logic so that it includes $L_{\infty\omega}$.} then the following conditions are equivalent:
\begin{enumerate}
\item $\K$ is closed under isomorphisms.
\item $\K$ is definable in sort logic.
\end{enumerate}
\end{theorem}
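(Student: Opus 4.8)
The plan is to prove the two directions separately, and the whole argument will parallel the structure of McGee's Theorem~\ref{mcgee} and Theorem~\ref{impr}, but now exploiting the fact that sort logic can quantify over sorts outside the domain, so it can in particular define well-ordering and isomorphism types uniformly.

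\textbf{The direction (2) $\to$ (1).} This is the easy implication. Every level $\Delta_n$ of sort logic is built from $L^2$ by the operations $L^*\mapsto\Sigma(L^*)$ and negation, and each of these operations preserves closure under isomorphisms: $L^2$ is closed under isomorphisms, taking projections (reducts) of isomorphism-closed classes yields isomorphism-closed classes, and complements of isomorphism-closed classes are isomorphism-closed. So by induction on $n$ every $\Delta_n$-definable class is closed under isomorphisms, hence so is every class definable in sort logic.

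\textbf{The direction (1) $\to$ (2).} This is the substantive part. Let $\K$ be a model class of finite vocabulary $L$, closed under isomorphisms, and definable in set theory without parameters by a formula $\varphi(x)$: thus for every $L$-structure $\mm$, $\mm\in\K\iff V\models\varphi(\mm)$. The idea is to reflect this set-theoretic definition down into the structure $\mm$ itself using extra sorts. Given $\mm$ with domain $M$, one wants to say, in sort logic with a formula independent of $|M|$: ``there exist new sorts $N, E$ such that $(N,E)$ is a well-founded extensional model of a sufficiently large finite fragment $T_0$ of $\ZFC$, $M$ (together with its relations, coded via the extra sorts) sits inside $(N,E)$ as a genuine element, and $(N,E)\models\varphi(\check\mm)$ where $\check\mm$ is the element of $N$ coding $\mm$.'' Because $\varphi$ is $\Sigma_n$ for some fixed $n$ in the Levy hierarchy, by the usual reflection argument this statement is equivalent (over $V$) to $\mm\in\K$: if $\mm\in\K$ take $N = V_\alpha$ for $\alpha$ large enough that $V_\alpha$ reflects $\varphi$ and contains $\mm$; conversely, well-foundedness forces $(N,E)$ to be (isomorphic to) a transitive set, $\varphi$ is absolute enough between transitive models reflecting it and $V$ (here one uses that $T_0$ is a large enough fragment of $\ZFC$ and that the relevant $\Sigma_n$/$\Pi_n$ statements are preserved), so $(N,E)\models\varphi(\check\mm)$ implies $V\models\varphi(\mm)$. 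The quantifier ``there exist new sorts $N,E$'' together with ``$E$ is well-founded'' is exactly what sort logic provides: well-foundedness of a definable relation is $\Pi_1$ in second-order terms, hence available in $\Sigma(L^2)$ and its negation, so the whole description lands in some fixed level $\Delta_n$ of the hierarchy. The point to emphasize is that the resulting sort-logic sentence does \emph{not} depend on $|M|$: the same formula works in every cardinality, because the existential sort quantifier is free to reach up to whatever $V_\alpha$ is needed.

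\textbf{The main obstacle.} The delicate point is the absoluteness bookkeeping: one must choose $T_0$ and the coding so that a well-founded model $(N,E)\models T_0$ is guaranteed, after transitive collapse, to compute $\varphi(\check\mm)$ the same way $V$ does. This requires that $\varphi$ be, say, $\Sigma_n$ for a fixed $n$ and that $T_0$ include enough of $\ZFC$ (separation and replacement up to the relevant complexity, plus enough of the axioms to run the Levy reflection/absoluteness lemmas) — and crucially that the transitive model be \emph{tall enough} to contain witnesses, which is why one quantifies over an arbitrary new sort rather than fixing its size. A secondary bookkeeping issue is encoding an arbitrary finite-vocabulary structure $\mm$, whose relations are subsets of $M^k$, as a single element of the set model $(N,E)$; this is routine coding with a few auxiliary sorts and pairing functions, but it must be done so that ``$\check\mm$ codes $\mm$'' is itself expressible with a sort-logic formula uniform in $|M|$. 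Once these absoluteness and coding details are pinned down, the equivalence $\mm\in\K\iff\mm\models(\text{the sort-logic sentence})$ follows, and since the sentence lies in a fixed $\Delta_n$, $\K$ is definable in sort logic.
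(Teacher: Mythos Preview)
The paper does not supply a proof of this theorem; it simply cites \cite{MR3205075} and moves on to discuss the result. Your proposal is essentially the standard argument from that reference: express ``there is a well-founded extensional model of a large enough finite fragment of $\ZFC$ containing (a code for) $\mm$ and satisfying $\varphi(\check\mm)$'' using the new-sort quantifiers of sort logic, and invoke L\'evy reflection and Mostowski collapse for the equivalence with $\mm\in\K$. This is correct in outline, and the obstacles you flag (absoluteness bookkeeping for a $\Sigma_n$ formula $\varphi$, and the uniform coding of $\mm$ into the set model) are the genuine ones. One small wording issue: you say well-foundedness is ``$\Pi_1$ in second-order terms, hence available in $\Sigma(L^2)$ and its negation''---well-foundedness is $\Pi^1_1$, so it is already in $L^2$ itself; there is no need to climb to $\Sigma(L^2)$ for that particular ingredient. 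The real reason one needs the higher $\Delta_n$ levels is the alternation of new-sort quantifiers tracking the L\'evy complexity of $\varphi$, which you do gesture at correctly.
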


The above characterization of isomorphism closure is not as elegant as McGee's Theorem~\ref{mcgee} simply because the definition of sort logic is  more complicated than that of $L_{\infty\infty}$. However, this theorem has the remarkable advantage over Theorem~\ref{mcgee}  that the definition of the model class in sort logic holds for models of \emph{all} cardinalities. As McGee points out, if he wanted to obtain the same level of generality with his method, he would have to take a disjunction of a proper class of  $L_{\infty\infty}$-formulas. Sort logic may be complicated but at least its formulas are sets and not proper classes. In McGee's case we may need a formula which is not even a set. In the case of sort logic each formula is a set, but the whole logic is not definable, only each level $\Delta_n$ separately is.

%Model theoretic properties of sort logic?
 
\section{Is having a Completeness Theorem a marker of logicality?}\label{compl}

We saw that first order logic and $\mathcal{L}(Q_0)$ are maximally logical under Sagi's criterion, grading the logicality of these logics by their L\"owenheim numbers. We also saw that the degree of logicality of  the logics $\mathcal{L}(Q_{\alpha})$ decreases as $\alpha$ increases: if $\alpha \leq \beta$, then $Q_{\beta}$ is less logical than $Q_{\alpha}$.

As for other kinds of quantifiers, the quantifiers  ``more" or Rescher quantifier 
$$\begin{array}{lcl}
Jxy\phi(x)\psi(y)&\iff&\mbox{ there are at least as many $x$ satisfying $\phi(x)$}\\
&&\mbox{as there are $y$ satisfying $\psi(y)$}
\end{array}$$and the equicardinality or H\"artig quantifier (see Section~\ref{hartig}) both have the same (very high) L\"owen\-heim number, and are thus less logical than $Q_{\alpha}$ at least for $\alpha$ below the first $\alpha$ such that $\alpha=\aleph_\alpha$.\footnote{If $\ell_I$ is the L\"owenheim number of the H\"artig quantifier, then $\ell_I$ is always bigger than the first fixed point of the $\aleph$-hierarchy \cite{MR304130}.
If $V=L$, then $\ell_I$ is bigger than the first inaccessible (if any exist) \cite{MR585518}.
If $V=L^\mu$, then $\ell_I$ is bigger than the first measurable cardinal \cite{MR510714}.
 If Con(``there is a super compact cardinal"), then Con($\ell_I<$ the first weakly inaccessible) \cite{MR2833152}.
 If Con($\ZFC$), then Con($\ell_I<2^\omega$) \cite{MR585518}.} We suggested earlier that the  H\"artig quantifier is a singular case, in seeming to express the core concept of isomorphism invariance, which is defined in terms of the concept of equicardinality. Under our criteria, the H\"artig quantifier is classified as (only) weakly logical even so, not only due to its high L\"owenheim number but also because it lacks very seriously a complete axiomatization \cite{MR585518}.

We remarked earlier that Sagi's demarcation of  logicality for a fixed class of logical constants, which is based on L\"owenheim numbers,  partitions  logical space  differently than ours. As we observed, logics that have a completeness theorem do not seem to be tied to the $\aleph$-hierarchy in any obvious way.
%, in that  some of these logics are axiomatisable and some are not. 
%The connection is the following: the more logical the terms that you fix in a system, the less the resulting consequence relation is metaphysically involved.
For example, Keisler's axioms 
\begin{equation}\label{ka}
\left\{\begin{array}{ll}
\mbox{Axiom $0$}&\mbox{Axiom schemes for $\mathcal{L}_{\omega\omega}$}\\
\mbox{Axiom $1$}&\neg Qx(x=y\vee x=z)\\
\mbox{Axiom $2$}&\forall x(\phi\to\psi)\to(Qx\phi\to Qx\psi)\\
\mbox{Axiom $3$}&Qx\phi(x)\to Qy\phi(y)\\
\mbox{Axiom $4$}&Qy\exists x\phi\to(\exists xQy\phi\vee Qx\exists y\phi)
\end{array}\right.
\end{equation}
are complete for $\mathcal{L}(Q_1)$, and if $GCH$ is assumed, then they are complete for all $\aleph_{\alpha+1}$ for which $\aleph_{\alpha}$ is regular (hence for $\aleph_n$ for all $n>0$).  
Whereas while $\mathcal{L}(Q_0)$ satisfies the Keisler Axioms, these (or any other recursive set of axioms) are not a complete axiomatisation of   $\mathcal{L}(Q_0)$.\footnote{See \cite{MR89816}.
} 

The suggestion here is that having a complete axiomatisation should be a marker of logicality on the simple ground that logics of this kind resemble  first order logic in one of its essential, if not most essential property, namely completeness. Except for $Q_0$, one would then classify ``there are very (i.e. uncountably) many"  as logical, as dictated by Keisler's axioms; in particular, the quantifier $Q_1$ would be graded as having a higher degree of logicality  than $Q_0$, as $\mathcal{L}(Q_1)$ is complete with respect to the Keisler Axioms
whereas  $\mathcal{L}(Q_0)$ is not complete in this respect.\footnote{However, $\mathcal{L}(Q_0)$ is the only one among the logics $\mathcal{L}(Q_\alpha)$ which satisfies the same logical consequences as $\mathcal{L}(Q_0)$. See \cite{dag}.} In short, our criterion of logicality, which turns on the degree to which a logic resembles first order logic in its model theoretic properties, comes apart from Sagi's already at the level of the logics $\mathcal{L}(Q_0)$ and $\mathcal{L}(Q_1)$.

A possible objection  might come from the fact that  Keisler's axioms are satisfied by many \emph{different} quantifiers, in fact by every $Q_\alpha$, where $\aleph_\alpha$ is regular. On the other hand, we may consider $Q_1$ logical admitting that from the point of view of logicality it cannot be separated from some other similar quantifiers. What is logical, according to this view, is not so much the cardinality $\aleph_1$, as, simply, uncountable cardinality---or ``very many"---in general, while the failure of $Q_0$ to permit a recursive axiomatization is an indication that there is something mathematical, as against logical, in $Q_0$. The permutation invariance characterization of logicality does not differentiate between $Q_1$ and $Q_0$, as the Completeness Theorem criterion does. By the invariance criterion every $Q_\alpha$ is logical, which may seem unintuitive. By further applying the Completeness Theorem criterion we can make finer distinctions and see a difference: $Q_1$ is ``more logical" than $Q_0$. 

For singular strong limit $\aleph_{\alpha}$ Keisler \cite{MR0269491} proved a Completeness Theorem, but with different axioms (no simple set of axioms is currently known). Thus there are (at least) two different ``logical" concepts of ``very many," one for $\aleph_{\alpha}$ the successor of regular and one for the singular case. (Successor of singular is open.)  The two concepts have different logical content: Keisler's Axioms for $Q_{\alpha}$ are valid for all regular $\aleph_\alpha$, and in some cardinalities (successor of regular) they have a Completeness theorem, modulo the $GCH$, as was noted above. In some other cardinalities (singular) different axioms have to be employed in order to get a completeness theorem. In the base case $\alpha=0$ no effectively given additional axioms can be added to give a completeness theorem.  

According to Sagi,  ``We should distinguish between a logic $\mathcal{L}(Q)$ used to measure the logicality of $Q$ and the logic we ultimately use for validity and logical consequence."\footnote{ibid, p. 22.} But if metaphysical involvement is inversely related to logicality, then having a completeness theorem should possibly be considered as a marker of logicality also under the Sagi criterion. This is because what the completeness theorem precisely does is to enable the conversion of semantic content, which is metaphysically involved (from the Sagi point of view, presumably), into syntactic content which, presumably, is not. 
In this connection, the following caveat is important: As Carnap observed \cite{MR0007892} the rules do not fix the interpretation of the logical constants, they have non-standard interpretations. In their \cite{MR3531783} Bonnay and Westerst\aa hl present a way to sidestep the problem:

\begin{quote}
Our take on Carnap's Problem is that it is made artificially difficult by considering all possible interpretations, no matter how bizarre. As speakers, we know that our language is going to be compositional, that it will have some true and some false sentences, and that its logical constituents will be topic-neutral. Therefore attention may be restricted to interpretations which satisfy these principles. Following Church's advice, this amounts to explicitly factoring out the role of semantic principles and the role of inference rules in fixing the interpretation of logical constants, rather than covertly using semantic notions to make sense of extended inference rules. This strategy proves successful both for propositional connectives and for quantifiers.
\end{quote} 

There is also the issue of expressive power, with respect to which the logics built on  the quantifiers  ``most", ``more" and the H\"artig quantifier differ. In certain models with an equivalence relation the H\"artig quantifier can be seen to be eliminable while the Rescher quantifier is not \cite{MR617189}. Thus the Rescher quantifier is strictly stronger than the H\"artig quantifier from the point of view of expressive power. The point here is that expressive power should be inversely related to logicality. 

Bonnay also ties logicality to syntax via a completeness theorem, as is spelled out in some detail in  \cite{bonnayrecent}.\footnote{See also the recent \cite{bon}.} Here Bonnay proposes a modification of the program of Carnap's {\em Aufbau} \cite{MR1661039},   calling for logical expressions to be defined syntactically. It is a feature of his treatment that  the absoluteness of a logic plays a crucial role in guaranteeing the {\em robustness} of the syntactic definition. Thus $L(Q_0)$ is an absolute logic because it has a recursive syntax, just like first order logic, and its semantics is absolute in transitive models of (even weak) set theory.\footnote{This is essentially because finiteness is absolute, i.e.  $\Delta_1$-definable. See  Barwise ``Admissible sets and Structures" \cite{MR0424560}, p. 38.}
On the other hand, $L(Q_1)$ is not absolute, even though it has a recursive syntax.\footnote{This is due to the well-known fact that countability is not absolute: a set can be uncountable in a model of set theory and countable in a transitive extension. On the relevance of absoluteness in this context see also Bonnay's \cite{MR2395046}.}

Burgess's  \cite{MR497917} forges a link between absoluteness and the idea of having a proof procedure. He  exhibits a quasi-constructive complete proof procedure involving rules with $\aleph_1$ premisses for the hereditary countable part of any absolute logic. 

In sum, if one classifies the first order existential quantifier  ``there is at least one," as  inherently logical, and if as such this quantifier is thought of as having  minimal or no semantic content, then is there a principled way to determine when and how higher quantification acquires semantic content, e.g. at what level in the cumulative hierarchy? Sagi's  answer is that logicality diminishes the higher up we are in cumulative hierarchy; while we suggest  that logicality kicks in arbitrarily high up, e.g. for all $\aleph_{\alpha+1}$ for which $\aleph_{\alpha}$ is regular, with the $\aleph_0$ case an anomaly, again because of completeness. The intuition here is that logics which have a completeness theorem are close to first order logic in this special sense. This is because completeness enables the conversion of semantic consequence into syntactic consequence via the two Keisler axiomatisations---albeit conditioned on the continuum hypothesis in certain important cases.\footnote{See  \cite{Kennedy2021-KENGTA-2} for a similar treatment of the relation between logicality and completeness.}

\section{Conclusions}

%It has been generally acknowledged that the Tarski-Sher criterion for logicality is necessary but not sufficient. 
McGee's Theorem translates the Tarski-Sher criterion into definability in a logic, obtaining cardinal dependent definability in $L_{\infty\infty}$. The cardinal dependency aspect of the theorem leads to criticism of logicality across domains (Feferman's third critique), to criticism of entanglement with mathematics (Feferman's first critique) and of non-absoluteness (Feferman's second critique). Maintaining cardinal dependency we lowered $L_{\infty\infty}$ to $\Delta(L_{\infty\omega})$ which is more palatable from the point of view of model theoretic properties. A stronger degree of logicality is obtained by abandoning cardinal dependency and investigating logics in which a candidate for logicality is definable. Since every class of models which is closed under isomorphisms is definable in some logic, this seems reasonable. Following and expanding on Sagi's suggestion to delineate degrees of logicality  according to their L\"owenheim numbers, we delineate logicality according to their L\"owenheim numbers but also according to a wider spectrum of model theoretic properties of logics, such as  Completeness Theorems and absoluteness properties, together with L\"owenheim-Skolem properties, all considered from a logicality point of view.

%\bibliographystyle{plain}
%\bibliography{ffbib3,ffbib2}

\end{document}